\newtheorem{prop}{Proposition}
\newtheorem*{rep@theorem}{\rep@title}
\newcommand{\newreptheorem}[2]{%
\newenvironment{rep#1}[1]{%
 \def\rep@title{#2 \ref{##1}}%
 \begin{rep@theorem}}%
 {\end{rep@theorem}}}
\newtheorem{theorem}{Theorem}[section]
\newtheorem{lemma}{Lemma}[section]
\newtheorem{pro}{Proposition}[section]
\theoremstyle{definition}
\newtheorem{mydef}{Definition}[section]
\newtheorem{rem}{Remark}[section]
\newtheorem{exa}{Example}[section]
\newtheorem{cor}{Corollary}[section]
\title{Torsion points on elliptic curves and tame semistable coverings}
\author{Paul Alexander Helminck}
\affil{University of Bremen,\\
ALTA, Institute for Algebra, Geometry, Topology and their Applications}
\begin{document}
\maketitle

\definecolor{qqqqff}{rgb}{0,0,1}

\begin{abstract}
In this paper, we study tame Galois coverings of semistable models that arise from torsion points on elliptic curves. These coverings induce Galois morphisms of intersection graphs and we express the decomposition groups of the edges in terms of the reduction type of the elliptic curve. To that end, we first define the reduction type of an elliptic curve $E/K(C)$ on a subgraph of the intersection graph $\Sigma(\mathcal{C})$ of a strongly semistable model $\mathcal{C}$. In particular, we define the notions of good and multiplicative reduction on subgraphs of the intersection graph of $\Sigma(\mathcal{C})$. After this, we show that if an elliptic curve has good reduction on an edge and $\mathrm{char}(k)\nmid{N}$, then the $N$-torsion extension is unramified above that edge, as in the codimension one case. Furthermore, we prove a combinatorial version of a theorem by Serre on transvections for elliptic curves with a non-integral $j$-invariant. Our version states that the Galois representation $\rho_{\ell}:G\rightarrow{\mathrm{SL}_{2}(\mathbb{F}_{\ell})}$ of an elliptic curve with multiplicative reduction at an edge $e$ contains a transvection $\sigma\in{I_{e'/e}}$ for prime numbers $\ell$ that do not divide the slope of the normalized Laplacian of the $j$-invariant. 


\end{abstract}

\section{Introduction}

Let $K$ be a discretely valued field with valuation $v:K^{*}\rightarrow{\mathbb{Z}}$, uniformizer $\pi$, maximal ideal $\mathfrak{m}$, valuation ring $R$ and residue field $R/\mathfrak{m}=k$. We will assume that $K$ is complete, has characteristic zero and that $k$ is algebraically closed. Let $C$ be a smooth, projective, geometrically irreducible curve over $K$. We will assume that it is semistable over $K$ and we let $\mathcal{C}$ be a \emph{strongly semistable model} over $R$. That is, $\mathcal{C}$ is semistable and the irreducible components in the special fiber are smooth. For every \emph{tame} geometric Galois cover\footnote{In the sense that $C'$ is geometrically irreducible, $K(C)\rightarrow{K(C')}$ is Galois and the order of the Galois group is coprime to characteristic of the residue field $k$.} 
\begin{equation}
C'\rightarrow{C}
\end{equation}
with Galois group $G$,
there is a strongly semistable model $\mathcal{C}$ and an associated Galois cover $\mathcal{C}'\rightarrow{\mathcal{C}}$ of strongly semistable models over a finite extension $K'$ of $K$, see \cite[Chapter 10, Proposition 4.30]{liu2}. This in turn induces a Galois morphism of metrized complexes of $k$-curves
\begin{equation}
\Sigma(\mathcal{C}')\rightarrow{\Sigma(\mathcal{C})}
\end{equation} 
with Galois group $G$, see \cite[Proposition 4.7.2]{tropabelian}. We will study this Galois covering of metrized complexes of $k$-curves for the special case of Galois coverings arising from torsion points on elliptic curves.

Let $E/K(C)$ be an elliptic curve over $K(C)$ with identity element $\mathcal{O}$. 
We fix an $N\in\mathbb{N}$ and consider the finite extension of $K(C)$ induced by the $N$-torsion:
\begin{equation}
K(C)\subset{K(C)(E[N])}.
\end{equation}
This extension is Galois and we denote its Galois group by $G$. We will assume that $K$ contains a primitive $N$-th root of unity $\mu_{N}$.  After another suitable finite extension of the base field $K$, this extends to a semistable covering
\begin{equation}
\mathcal{C}_{N}\rightarrow{\mathcal{C}},
\end{equation}
giving rise to a corresponding morphism of metrized complexes $\Sigma(\mathcal{C}_{N})\rightarrow{\Sigma(\mathcal{C})}$.  
Choosing a basis $\{P_{1},P_{2}\}$ for $E[N]$, we then find a natural injection 
\begin{equation}
\rho_{}:G\rightarrow{\text{SL}_{2}(\mathbb{Z}/N\mathbb{Z})}.
\end{equation}
For the rest of the paper, we will be interested in this representation and the corresponding inertia and decomposition groups for edges and vertices. 

The intersection graph $\Sigma(\mathcal{C}_{N})$ consists of codimension one points (the vertices/irreducible components) and codimension two points (the edges/intersection points). For the codimension one points, we can use the reduction theory in \cite[Chapter VII]{silv1} and \cite[Chapter 7]{Bosch1990}. For the codimension two points, no such reduction theory is available. For instance, there is no general notion of a "N\'{e}ron model" over more general base schemes than Dedekind schemes, see \cite{Holmes1} for the case where $E\rightarrow{S}$ is \emph{semistable}.


To say something about the ramification at these codimension two points, we will use a trick to switch from codimension two points to codimension one points. The idea is to subdivide an edge to obtain a regularization and then to consider the inertia groups of the new components. The inertia groups of these codimension one points are then related to the inertia group of the original edge by Theorem \ref{SwitchTheorem}.

Before we consider the extensions coming from torsion points on elliptic curves, we first define the reduction type of an elliptic curve on subgraphs $T$ of the intersection graph $\Sigma(\mathcal{C})$ of a strongly semistable model $\mathcal{C}$. This will use the notion of normalized Laplacians, which will be defined and studied in Sections \ref{LaplacianSection} and \ref{NormalizedLaplacianSection}. The reduction type of an elliptic curve on subgraphs of $\Sigma(\mathcal{C})$ will then be defined in Section \ref{ReductionType2}. It will be shown that the notions of good and multiplicative reduction are stable under finite extensions $K\subset{K'}$. Furthermore, we will show that these notions of reduction types on subgraphs reduce to the normal notions for vertices in subdivisions of $\Sigma(\mathcal{C})$. After that, we will illustrate that having good reduction is a closed condition, in the sense that if an elliptic curve has good reduction on a graph $T$, then it also has good reduction on the corresponding complete graph $\overline{T}$. We will also show that these notions of good and multiplicative reduction are stable under disjointly branched morphisms.     


After these introductory sections, we will consider torsion extensions $K(C)\subset{K(C)(E[N])}$ coming from elliptic curves over $K(C)$. We will prove two theorems for inertia groups of edges in $\Sigma(\mathcal{C}_{N})$ that are 
probably  familiar from the theory of elliptic curves for the Dedekind case. The first theorem uses the N\'{e}ron-Ogg-Shafarevich criterion and the second theorem uses the Tate uniformization for elliptic curves with split multiplicative reduction. 
We write $\delta_{e}(\phi_{j})$ for the absolute value of the slope of the normalized Laplacian (see Definition \ref{NormalizedLaplacian}) corresponding to the $j$-invariant. We then have the following two theorems. 
\begin{reptheorem}{MainTheoremGoodReduction}
{\bf{(Good reduction for edges)}}
Let $E/K(C)$ be an elliptic curve over a curve $C$ with strongly semistable model $\mathcal{C}$ and intersection graph $\Sigma(\mathcal{C})$. Let $N$ be an integer with $\mathrm{char}(k)\nmid{N}$ and suppose that $E$ has good reduction on an edge $e$ of $\Sigma(\mathcal{C})$. Then $E[N]$ is unramified over $e$. 
\end{reptheorem}
\begin{reptheorem}{MainTheorem2}
{\bf{(Transvections for edges)}}
Let $E$ be an elliptic curve over $K(C)$ with multiplicative reduction at an edge $e\in\Sigma(\mathcal{C})$ and let $\mathcal{C}_{\ell}\rightarrow{\mathcal{C}}$ be the semistable covering associated to the Galois extension $K(C)\subset{K(C)(E[N])}$ with Galois group $G$ and representation $\rho_{\ell}:G\rightarrow{\mathrm{SL}_{2}(\mathbb{F}_{\ell})}$. Let $p=\text{char}(k)$  and let $\ell\geq{3}$ be a prime with $p\nmid{\#\rho_{\ell}(G)}$ and $\ell\nmid{\delta_{e}(\phi_{j})}$. Let $e'$ be an edge of $\Sigma(\mathcal{C}_{\ell})$ lying above $e$. There then exists a $\sigma\in{I_{e'/e}}$ such that
\begin{equation}
\rho_{\ell}(\sigma)\equiv{
  \begin{pmatrix}
1 & 1 \\
0&1 \end{pmatrix} 
}\mod{\ell}.
\end{equation}
\end{reptheorem}

For $G$ acting irreducibly on $E[\ell]$ (with $E$ as in Theorem \ref{MainTheorem2}), we then have that $G\rightarrow{\mathrm{SL}_{2}(\mathbb{F}_{\ell})}$ is surjective, as in \cite[IV-20, Lemma 2]{Ser2}. This is contained in Theorem \ref{SurjectivityRepresentation}. Checking the irreducibility of $G$ on $E[\ell]$ is then a matter of checking whether $E$ admits any rational $p$-isogenies to other elliptic curves. 


\section{Preliminaries}

In this section, we will give the notation we will use throughout this paper and a summary of the notions of disjointly branched morphisms, metrized complexes of $k$-curves and Laplacians. The reader is referred to \cite{tropabelian}, \cite{ABBR1} and \cite{supertrop} for more background on these topics. 

The notion of disjointly branched morphisms is introduced as a substitute for semistable covers to highlight the fact that the branch locus plays an important role. For these covers $\mathcal{C}\rightarrow{\mathcal{D}}$ (which are Galois by assumption throughout the paper), we obtain a Galois morphism of intersection graphs $\Sigma(\mathcal{C})\rightarrow{\Sigma(\mathcal{D})}$, in the sense that $\Sigma(\mathcal{C})/G=\Sigma(\mathcal{D})$. Moreover, assigning lengths and weights to the edges and vertices of $\Sigma(\mathcal{C})$ and $\Sigma(\mathcal{D})$, we obtain a Galois quotient of metrized complexes of $k$-curves. That is to say, we also have a quotient of the corresponding components and the lengths of the edges in $\Sigma(\mathcal{C})$ are multiplied by their ramification indices.

Given a strongly semistable model $\mathcal{C}$ of a curve $C$, one can associate to any principal divisor $\text{div}_{\eta}(f)$ (where $f\in{K(C)}$) its \emph{Laplacian} $\phi_{f}$, which is a piecewise linear function on the intersection graph $\Sigma(\mathcal{C})$ of $\mathcal{C}$. This Laplacian keeps track of the various vertical divisors associated to $f$ and one can use this to relate the divisor of $f$ to the divisors of the reductions $\overline{f^{\Gamma}}$ for irreducible components $\Gamma$. This relation is more commonly known as the Poincar\'{e}-Lelong or Slope formula and can be found in \cite[Theorem 3.3.2]{tropabelian} and \cite[Theorem 5.15]{BPRa1}. We will also use a version which is more directly related to the vertical divisors on $\mathcal{C}$, see Theorem \ref{MainThmVert}. After this, we will define the notion of a \emph{normalized} Laplacian and we will study its behavior under finite extensions. This will lead to the definition of the \emph{extended} normalized Laplacian on the corresponding metrized graph of $\Sigma(\mathcal{C})$.  

\subsection{Notation}

We will use the following notation throughout this paper:

\begin{itemize}
\item $K$ is a discretely valued complete field of characteristic zero with valuation $v:K^{*}\rightarrow{\mathbb{Z}}$,
\item $R=\{x\in{K}:v(x)\geq{0}\}$ is the valuation ring of $K$,
\item $R^{*}=\{x\in{K}:v(x)=0\}$ is the unit group,
\item $\mathfrak{m}=\{x\in{K}:v(x)>0\}$ is the unique maximal ideal in $R$,
\item $\pi$ is a uniformizer for $v$, i.e. $\pi{R}=\mathfrak{m}$,
\item $k:=R/\mathfrak{m}$ is the residue field of $R$.
\end{itemize}
We will assume that $v$ is normalized so that $v(\pi)=1$. For simplicity, we will also assume that the residue field $k$ is algebraically closed. In practice, it will be sufficient to assume that the residue field is large enough to contain the coordinates of all the branch and ramification points. 
For any finite extension $K'$ of $K$, we let $R'$ be a discrete valuation ring in $K'$ dominating $R$. For any Noetherian local ring $A$ with maximal ideal $\mathfrak{m}_{A}$, we let $\hat{A}$ be its $\mathfrak{m}_{A}$-adic completion, as in \cite[Section 1.3]{liu2}. Curves over a field in this paper will be smooth, geometrically irreducible and projective unless stated otherwise. The function field of any such curve will be denoted by $K(C)$ and its algebraic closure by $\overline{K(C)}$. Its absolute Galois group will be denoted by $G_{K(C)}$ and for any subextension $K(C)\subset{L}\subset{\overline{K(C)}}$ we denote the corresponding subgroup by $G_{L}$. In particular, if $L$ is finite Galois over $K(C)$, we find that $G_{L}$ is a normal subgroup of finite index. The Galois group of $L/K(C)$ is then $G_{K(C)}/G_{L}$.

\subsection{Metrized complexes of $k$-curves}

Let $\mathcal{C}$ be a strongly semistable model over $R$ of a curve $C$ over $K$. That is, it is semistable and the irreducible components in the special fiber are smooth. For the generic point $\mathfrak{p}\in\mathcal{C}_{s}$ of any irreducible component $\Gamma$, we assume that the valuation $v_{\mathfrak{p}}(\cdot{})$ is normalized such that it coincides with the discrete valuation on $R$. In other words, $v_{\mathfrak{p}}(\pi)=1$ (since $\mathcal{C}_{s}$ is reduced). In this section, we will associate a \emph{metrized complex of $k$-curves} to $\mathcal{C}$, which consists of a graph and a compatible assignment of a curve over $k$ to every vertex. We first define the intersection graph of $\mathcal{C}$. 

\begin{mydef}{\bf{(Dual Intersection Graph)}}
Let $\mathcal{C}$ be a strongly semistable model for a curve $C$ over $K$. Let $\{\Gamma_{1},...,\Gamma_{r}\}$ be the set of irreducible components. We define the dual intersection graph $\Sigma(\mathcal{C})$ of $\mathcal{C}_{}$ to be the finite graph whose vertices $v_{i}$ correspond to the irreducible components $\Gamma_{i}$ of $\mathcal{C}_{s}$ and whose edges correspond to intersections between components. The latter means that we have one edge for every point of intersection. 
We write $V(\Sigma(\mathcal{C}))$ for the vertex set and $E(\Sigma(\mathcal{C}))$ for the edge set of $\Sigma(\mathcal{C})$. 
\end{mydef}

We will now assign some additional data to this intersection graph to turn it into a weighted metric graph. We will define this notion first.
\begin{mydef}
A (finite) {\bf{weighted metric graph}} consists of a triple $(\Sigma, l(\cdot{}),w(\cdot{}))$, where $\Sigma$ is a finite graph, $l(\cdot{})$ a length function
\begin{equation}
l:E(\Sigma)\rightarrow{\mathbb{N}}
\end{equation}
and $w(\cdot{})$ a weight function
\begin{equation}
w: V(\Sigma)\rightarrow{\mathbb{N}}.
\end{equation}
\end{mydef}

For any strongly semistable model $\mathcal{C}$ of $C$, we now assign a length function and a weight function to $\Sigma(\mathcal{C})$, turning it into a weighted metric graph. We use the following fact: 
for every intersection point $z\in\mathcal{C}$, we have that
\begin{equation}
 \hat{\mathcal{O}}_{\mathcal{C},z}\simeq{R[[x,y]]/(xy-\pi^{n})},
\end{equation}
see \cite[Chapter 10, Corollary 3.22]{liu2}. 
We then define $l:E(\Sigma(\mathcal{C}))\rightarrow{\mathbb{N}}$ by
\begin{equation}
l(e)=n.
\end{equation}
The weight function 
$w:V(\Sigma)\rightarrow{\mathbb{N}}
$ is then defined by
\begin{equation}
w(v_{i}):=g(\Gamma_{i}),
\end{equation}
where $g(\Gamma_{i})$ is the arithmetic genus of $\Gamma_{i}$. We will refer to the triple $\{(\Sigma(\mathcal{C}),\ell(\cdot{}),w(\cdot{}))\}$ as the weighted metric graph associated to $\mathcal{C}$. 

To every vertex $v$ in $\Sigma(\mathcal{C})$, we now add a curve $C_{v}/k$ and an identification of the edges adjacent to $v$ with $k$-rational points of $C_{v}$. This will turn our weighted metric graph into a metrized complex of $k$-curves, see \cite[Definition 4.7.1]{tropabelian} and \cite[Definition 2.17]{ABBR1} for more on this concept.   

\begin{mydef}
A {\bf{metrized complex}} of $k$-curves consists of the following data:
\begin{enumerate}
\item A weighted metric graph $(\Sigma, w(\cdot{}), l(\cdot{}))$,
\item A smooth, irreducible projective curve $C_{v}/k$ for every vertex $v\in{V(\Sigma)}$ such that $w(v)=g(C_{v})$.
\item An injective function $\text{red}_{v}:T_{v}\rightarrow{C_{v}(k)}$, where $T_{v}$ is the set of edges connected to $v$.
\end{enumerate}
The metrized complex $k$-curves corresponding to this data will be denoted by $(\Sigma,w(\cdot{}),l(\cdot{}),\text{red}_{v}(\cdot{}))$, or just $\Sigma$ if no confusion can arise.
\end{mydef}

Now let $\mathcal{C}$ be a strongly semistable model for $C$. Let $(\Sigma(\mathcal{C}), w(\cdot{}), l(\cdot{}))$ be the weighted metric graph associated to $\mathcal{C}$. A vertex $v$ naturally comes from a smooth irreducible projective curve $\Gamma$, so we assign this curve to the vertex $v$. Furthermore, we have that every edge $e$ corresponds to an intersection point $z$ of two components $\Gamma$ and $\Gamma'$. We assign the induced point $\tilde{z}\in\Gamma$ to $e$.  We thus see that we have a natural metrized complex of $k$-curves associated to $\mathcal{C}$, which we will again denote by $\Sigma(\mathcal{C})$. 



\subsection{Disjointly branched morphisms}\label{DisBran3}

Let $\phi:C\rightarrow{D}$ be a finite, Galois morphism of smooth, projective, geometrically irreducible curves over $K$ and let $G$ be its Galois group. We will assume that the order of $G$ is coprime to the characteristic of $k$. In other words, $\phi$ is \emph{tame}.  Let $\mathcal{D}$ be any strongly semistable model of $D$ over $R$. Let $\mathcal{C}$ be the normalization of $\mathcal{D}$ in the function field $K(C)$ of $C$ and consider the induced finite Galois morphism $\phi_{\mathcal{C}}:\mathcal{C}\rightarrow{\mathcal{D}}$.
\begin{mydef}
We say that $\phi_{\mathcal{C}}$ is \emph{disjointly branched} if the following hold:
\begin{enumerate}
\item The closure of the branch locus in $\mathcal{D}$ consists of disjoint, smooth sections over $\text{Spec}(R)$.
\item Let $y$ be a generic point of an irreducible component in the special fiber of $\mathcal{C}$. Then the induced morphism $\mathcal{O}_{\mathcal{D},\phi_{\mathcal{C}}(y)}\rightarrow{\mathcal{O}_{\mathcal{C},y}}$ is \'{e}tale.
\end{enumerate}
\end{mydef}

By \cite[Proposition 4.1.1.]{tropabelian} (which uses \cite[Theorem 2.3.]{liu1}), we then find that $\mathcal{C}$ is also strongly semistable, so we obtain a finite morphism of strongly semistable models $\mathcal{C}\rightarrow{\mathcal{D}}$ over $R$. By a series of blow-ups, we can always find a strongly semistable model $\mathcal{D}$ such that the branch locus is disjoint. After removing the ramification on the vertical components by a finite extension $K\subset{K'}$, we then obtain a disjointly branched morphism $\phi_{\mathcal{C}}:\mathcal{C}\rightarrow{\mathcal{D}}$ by taking the normalization of $\mathcal{D}$ in $K(C)$. We will refer to the morphism $\phi_{\mathcal{C}}$ created in this way as a semistable covering (or disjointly branched morphism) for $C\rightarrow{D}$. 

We now have the following
\begin{theorem}
The finite Galois morphism $\mathcal{C}\rightarrow{\mathcal{D}}$ induces a quotient
\begin{equation}
\Sigma(\mathcal{C})/G=\Sigma(\mathcal{D})
\end{equation}
of metrized complexes of $k$-curves.
\end{theorem}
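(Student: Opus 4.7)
The plan is to verify that each piece of the metrized-complex structure on $\Sigma(\mathcal{C})$ — the underlying graph, the curves at vertices, the weight function, the length function, and the reduction maps — is $G$-equivariant and produces the corresponding structure on $\Sigma(\mathcal{D})$ upon taking the quotient. First I would set up the $G$-action: since $\phi_{\mathcal{C}}$ is Galois with $\mathcal{C}/G = \mathcal{D}$, the group $G$ acts on the special fiber $\mathcal{C}_s$ and permutes its irreducible components and intersection points, transitively over each component (resp.\ intersection point) of $\mathcal{D}_s$. This gives a $G$-action on $V(\Sigma(\mathcal{C}))$ and $E(\Sigma(\mathcal{C}))$ respecting incidence, and the induced maps $V(\Sigma(\mathcal{C}))/G \to V(\Sigma(\mathcal{D}))$ and $E(\Sigma(\mathcal{C}))/G \to E(\Sigma(\mathcal{D}))$ are bijections, so the quotient of the underlying graphs is $\Sigma(\mathcal{D})$.

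For the curves at vertices, let $v \in \Sigma(\mathcal{C})$ have decomposition group $D_v = \mathrm{Stab}_G(v)$ and map to $v' \in \Sigma(\mathcal{D})$. The attached curves $\Gamma_v$ and $\Gamma_{v'}$ satisfy $\Gamma_v/D_v = \Gamma_{v'}$, because $G$ acts transitively on components over $\Gamma_{v'}$ and the induced finite morphism $\Gamma_v \to \Gamma_{v'}$ is a $D_v$-Galois cover. The weight $w(v) = g(\Gamma_v)$ depends only on the component, hence is $G$-invariant on orbits, and likewise the reduction map $\mathrm{red}_v$ (sending edges at $v$ to intersection points on $\Gamma_v$) is $D_v$-equivariant by construction, so both descend to the stated data on $\Sigma(\mathcal{D})$.

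The main technical step is the length function. For an edge $e \in \Sigma(\mathcal{C})$ corresponding to an intersection point $z$ mapping to $z' \in \mathcal{D}_s$, I would analyze the induced map on completed local rings
\begin{equation*}
\hat{\mathcal{O}}_{\mathcal{D},z'} \simeq R[[u,v]]/(uv-\pi^{m}) \longrightarrow \hat{\mathcal{O}}_{\mathcal{C},z} \simeq R[[x,y]]/(xy-\pi^{n}).
\end{equation*}
Using that the two branches through $z$ map to the two branches through $z'$ (guaranteed by the disjointly-branched hypothesis applied at the generic points of the adjacent components), one has $u = x^{a}\cdot(\text{unit})$ and $v = y^{b}\cdot(\text{unit})$ after reordering branches, where $a,b$ are the ramification indices of those branches. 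Substituting into $uv = \pi^{m}$ and comparing with $xy = \pi^{n}$ then recovers the desired scaling relation between $n$, $m$, and the ramification index of the edge. The main obstacle here is controlling the unit factors and ensuring the branch-to-branch matching, which is the point at which finiteness of the completed local ring extension combined with the disjointly-branched hypothesis is used crucially.

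Putting the pieces together, each layer of the metrized-complex structure on $\Sigma(\mathcal{C})$ is $G$-equivariant and its quotient matches the corresponding layer of $\Sigma(\mathcal{D})$, so the set-theoretic quotient promotes to the identification $\Sigma(\mathcal{C})/G = \Sigma(\mathcal{D})$ of metrized complexes of $k$-curves.
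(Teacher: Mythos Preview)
The paper does not actually prove this statement: its ``proof'' consists solely of citations to \cite[Theorem 4.6.1, Proposition 4.7.1 and Proposition 4.7.2]{tropabelian} and a pointer to \cite[Section 7]{ABBR1}. So there is no in-paper argument to compare against; your sketch is in fact more detailed than what the paper provides, and its structure (quotient on the underlying graph, then on the vertex curves and marked points, then on edge lengths via the completed local rings at nodes) is exactly the standard route and is what the cited results in \cite{tropabelian} carry out.

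One small imprecision worth tightening in your length-function step: in the disjointly-branched (hence tame) setting the inertia group $I_{z}$ at a node is cyclic, and after choosing coordinates it acts by $x\mapsto\zeta x$, $y\mapsto\zeta^{-1}y$, so both local branch exponents agree, $a=b=|I_{z}|$. Your phrasing ``$a,b$ are the ramification indices of those branches'' could be misread as allowing $a\neq b$, or as referring to the (trivial, by hypothesis) ramification at the generic points of the adjacent components. Once you say $a=b=|I_{e'/e}|$, the comparison of $uv=\pi^{m}$ with $(xy)^{a}=\pi^{na}$ immediately gives $l(e)=|I_{e'/e}|\cdot l(e')$, which is the scaling relation the paper alludes to in its introduction.
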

\begin{proof}
This is \cite[Theorem 4.6.1, Proposition 4.7.1 and Proposition 4.7.2]{tropabelian}. We refer the reader the reader to that thesis for the details. We also invite the reader to compare this to the results in  \cite[Section 7]{ABBR1}.
\end{proof}

\subsection{Laplacians}\label{LaplacianSection}

Let $\Sigma$ be a graph, which we will assume to be finite, connected and without loop edges. Let $V(\Sigma)$ be its vertices and $E(\Sigma)$ its edges. We define $\text{Div}(\Sigma)$ to be the free abelian group on the vertices of $\Sigma$. 
Writing $D\in{\text{Div}(\Sigma)}$ as $D=\sum_{v\in{V(\Sigma)}}c_{v}(v)$, we define the degree map as $\text{deg}(D)=\sum_{v\in{V(\Sigma)}}c_{v}$. We let $\text{Div}^{0}(\Sigma)$ be the group of divisors of degree zero on $\Sigma$.

 Now let $\mathcal{M}(\Sigma)$ be the group of $\mathbb{Z}$-valued functions on $V(\Sigma)$. Define the {\bf{Laplacian operator}} $\Delta:\mathcal{M}(\Sigma)\longrightarrow\text{Div}^{0}(\Sigma)$ by 
\begin{equation}\label{LaplacianDefinition}
\Delta(\phi)=\sum_{v\in{V(\Sigma)}}\sum_{e=vw\in{E(\Sigma)}}(\phi(v)-\phi(w))(v).
\end{equation}   
The fact that the image of $\Delta$ is in $\text{Div}^{0}(\Sigma)$ is contained in \cite[Corollary 1]{bakerfaber}, where it is found as a consequence of the self-adjointness of the Laplacian operator. 

We then define the group of principal divisors to be the image of the Laplacian operator:
\begin{equation*}
\text{Prin}(\Sigma):=\Delta(\mathcal{M}(\Sigma)).
\end{equation*}

Let $\mathcal{C}$ be a strongly semistable \emph{regular} model of a curve $C/K$ and let $\text{Prin}(C)$ be the group of principal divisors on $C$. We now give a specialization map from the group $\text{Div}(\mathcal{C})$ of Cartier divisors on $\mathcal{C}$ to the divisor group $\text{Div}(\Sigma(\mathcal{C}))$. 
This will use the intersection pairing on $\mathcal{C}$, see \cite[Section 2.2]{tropabelian}. 
Let $\rho:\mathrm{Div}(\mathcal{C})\rightarrow{\mathrm{Div}(\Sigma(\mathcal{C}))}$ be the homomorphism given by
\begin{equation}
\rho(\mathcal{D})=\sum_{v_{i}\in\Sigma(\mathcal{C})}(\mathcal{D}\cdot{\Gamma_{i}})(v_{i}).
\end{equation}
We will call this the {\emph{specialization}} homomorphism. Note that it is indeed a homomorphism by bilinearity of the intersection map. For linearly equivalent divisors $D$ and $D'$, we have $D\cdot{E}=D'\cdot{E}$, see \cite[Chapter 9, Theorem 1.12.b]{liu2}. This then shows that $\text{Prin}(\mathcal{C})$ is in the kernel of $\rho$. A simple calculation on the vertical divisors (see \cite[Lemma 3.3.1]{tropabelian}) then shows that vertical divisors are mapped to principal divisors on $\Sigma(\mathcal{C})$. This then gives 
\begin{lemma}\label{Principaldivisors}
The specialization map $\rho$ induces a map
\begin{equation}
\mathrm{Prin}({C})\rightarrow{\mathrm{Prin}(\Sigma(\mathcal{C}))}.
\end{equation}
\end{lemma}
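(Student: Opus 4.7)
The plan is to take a principal divisor $D = \mathrm{div}_\eta(f)$ on $C$ with $f \in K(C)^*$ and lift it to the Cartier divisor $\mathrm{div}_{\mathcal{C}}(f)$ on the regular semistable model $\mathcal{C}$. This principal Cartier divisor decomposes uniquely as $\mathrm{div}_{\mathcal{C}}(f) = H_f + V_f$, where $H_f$ is the horizontal closure of $D$ and $V_f = \sum_{i} n_i \Gamma_i$ is the vertical part. The induced map on principal divisors will send $[D]$ to $\rho(H_f)$, and to show this lies in $\mathrm{Prin}(\Sigma(\mathcal{C}))$ I would invoke the fact (noted just before the statement) that $\mathrm{Prin}(\mathcal{C}) \subset \ker(\rho)$, giving $\rho(H_f) = -\rho(V_f)$. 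Everything then reduces to verifying that $\rho$ sends every vertical divisor to a principal divisor on $\Sigma(\mathcal{C})$.

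The core step is the intersection-theoretic calculation for an arbitrary vertical divisor $V_\phi = \sum_i \phi(v_i)\Gamma_i$. On a regular strongly semistable model, the closed fiber $\mathcal{C}_s = \sum_i \Gamma_i$ satisfies $\mathcal{C}_s \cdot \Gamma_j = 0$, so $\Gamma_j \cdot \Gamma_j = -\sum_{i \neq j} \Gamma_i \cdot \Gamma_j$; and for $i \neq j$ the intersection number $\Gamma_i \cdot \Gamma_j$ counts the intersection points, i.e.\ the number of edges joining $v_i$ and $v_j$ in $\Sigma(\mathcal{C})$. Substituting these into $\rho(V_\phi)(v_j) = \sum_i \phi(v_i)(\Gamma_i \cdot \Gamma_j)$ and rearranging yields
\[
\rho(V_\phi)(v_j) \;=\; \sum_{e = v_j v_i \in E(\Sigma(\mathcal{C}))}\bigl(\phi(v_i) - \phi(v_j)\bigr) \;=\; -\Delta(\phi)(v_j),
\]
so $\rho(V_\phi) = -\Delta(\phi) \in \mathrm{Prin}(\Sigma(\mathcal{C}))$. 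Combining with $\rho(H_f) = -\rho(V_f)$, we get $\rho(H_f) = \Delta(\phi_f) \in \mathrm{Prin}(\Sigma(\mathcal{C}))$, as required. Well-definedness of the resulting map on $\mathrm{Prin}(C)$ is automatic, since any two Cartier extensions of $D$ to $\mathcal{C}$ differ by a vertical divisor, which we have just shown maps into $\mathrm{Prin}(\Sigma(\mathcal{C}))$.

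The main obstacle is really just this single identity $\rho(V_\phi) = -\Delta(\phi)$; once it is in hand, the surrounding argument is formal. The identity itself is the classical observation that the intersection matrix of the special fiber of a regular semistable model is (up to sign) the graph Laplacian of its dual graph, and it is exactly the content of the reference \cite[Lemma 3.3.1]{tropabelian} invoked in the preamble.
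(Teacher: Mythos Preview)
Your proposal is correct and follows essentially the same approach as the paper: the paper outlines the argument in the paragraph immediately preceding the lemma, noting that $\mathrm{Prin}(\mathcal{C})\subset\ker\rho$ and that vertical divisors map to $\mathrm{Prin}(\Sigma(\mathcal{C}))$ (deferring the computation to \cite[Lemma 3.3.1]{tropabelian}), and then states that the lemma follows. You have simply made the decomposition $\mathrm{div}_{\mathcal{C}}(f)=H_f+V_f$ and the identity $\rho(V_\phi)=-\Delta(\phi)$ explicit, which is exactly the content of that reference.
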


By Lemma \ref{Principaldivisors} we find that for any $f\in{K(C)}$, there exists a $\phi\in{\mathcal{M}(\Sigma(\mathcal{C}))}$ such that $\rho((f))=\Delta(\phi)$. We refer to this $\phi$ as the \emph{Laplacian} of $f$. This $\phi$ can tell us a great deal about the various \emph{vertical} divisors corresponding to $f$, as we will see in Theorem \ref{MainThmVert}. 

Note that by taking the closure of a principal divisor in ${C}$, one obtains a Cartier divisor which might \emph{not} be principal. Indeed, the same generator might vanish on some irreducible component in the special fiber $\mathcal{C}_{s}$. For any component $\Gamma$, we then consider the $\Gamma$-modified form of any $f\in{K(C)}$, which is defined by
\begin{equation}
f^{\Gamma}=\dfrac{f}{\pi^{k}},
\end{equation} 
where $k:=v_{\Gamma}(f)$. By definition, we then have $v_{\Gamma}(f^{\Gamma})=0$. We can then express the \emph{vertical} divisor corresponding to $f^{\Gamma}$ in terms of the Laplacian function $\phi$ by the following
\begin{theorem}\label{MainThmVert}
Let $\rho(\text{div}_{\eta}(f))$ be the induced principal divisor of $f$ on the intersection graph $\Sigma(\mathcal{C})$ of $\mathcal{C}$. 
Write
\begin{equation*}
\Delta(\phi)=\rho(\text{div}(f))
\end{equation*}
for some $\phi:\mathbb{Z}^{V}\longrightarrow{\mathbb{Z}}$. 
Choose $\phi$ such that $\phi(\Gamma)=0$. Then the unique vertical divisor corresponding to $\text{div}_{\eta}(f)$ with $V_{f^{\Gamma}}(\Gamma)=0$ is given by
\begin{equation}\label{ExplVert2}
V_{f^{\Gamma}}=\sum_{i}\phi(\Gamma_{i})\cdot{\Gamma_{i}}.
\end{equation} 
\end{theorem}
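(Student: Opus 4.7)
The strategy is to identify the divisor $V_{f^\Gamma}$ as a vertical divisor with prescribed specialization and prescribed value at $\Gamma$, and then invoke an injectivity statement for the specialization map on such divisors. Since $\mathcal{C}$ is regular, $\text{div}(f)$ is a Cartier divisor on $\mathcal{C}$, and we can decompose it as
\begin{equation}
\text{div}(f) \;=\; \overline{\text{div}_{\eta}(f)} + V,
\end{equation}
where $\overline{\text{div}_{\eta}(f)}$ is the horizontal closure of the divisor on the generic fibre and $V = \sum_i a_i \Gamma_i$ is the vertical correction. The $\Gamma$-modified divisor is then $V_{f^{\Gamma}} = V - v_{\Gamma}(f)\cdot \mathcal{C}_{s}$, and this is by construction the unique vertical divisor whose coefficient on $\Gamma$ vanishes and which realizes $\text{div}(f^{\Gamma}) - \overline{\text{div}_{\eta}(f)} = V_{f^{\Gamma}}$.

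Next I would apply the specialization map $\rho$. Since $\text{div}(f)$ is principal on $\mathcal{C}$ and $\mathcal{C}_{s}$ is itself a principal divisor (it is the divisor of $\pi$), both are killed by $\rho$ by the intersection-theoretic argument recalled just before Lemma~\ref{Principaldivisors}. Therefore
\begin{equation}
\rho(V_{f^{\Gamma}}) \;=\; -\rho\bigl(\overline{\text{div}_{\eta}(f)}\bigr) \;=\; -\Delta(\phi),
\end{equation}
up to the sign convention fixed by \eqref{LaplacianDefinition}. On the other hand, writing $V_{f^{\Gamma}} = \sum_i b_i \Gamma_i$ and using $\Gamma_i^{2} = -\sum_{j\neq i}(\Gamma_i \cdot \Gamma_j)$ (because $\mathcal{C}_{s}\cdot \Gamma_{i}=0$), a direct computation gives
\begin{equation}
\rho(V_{f^{\Gamma}})(v_{i}) \;=\; \sum_{e=v_{i}v_{j}} (b_{j}-b_{i}) \;=\; -\Delta(b)(v_{i}),
\end{equation}
where $b$ is the function $v_{i}\mapsto b_{i}$. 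This is precisely the content of \cite[Lemma 3.3.1]{tropabelian} cited in the text.

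Combining the two identities yields $\Delta(b) = \Delta(\phi)$, so $b - \phi \in \ker \Delta$. Since $\Sigma(\mathcal{C})$ is connected (the generic fibre is geometrically irreducible), the kernel of $\Delta$ consists of the constant functions. Both $b$ and $\phi$ vanish at $\Gamma$ by our normalizations, forcing $b = \phi$ and hence $V_{f^{\Gamma}} = \sum_{i} \phi(\Gamma_{i})\cdot \Gamma_{i}$, which is the claim. The only real subtlety in this plan is pinning down the sign convention in $\rho(V) = \pm \Delta(\phi_V)$ consistently with \eqref{LaplacianDefinition}, so that the ``$-$'' coming from the principal divisor $\text{div}(f)$ and the ``$-$'' coming from the self-intersection identity cancel correctly; everything else is bookkeeping.
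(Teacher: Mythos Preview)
Your argument is correct and is essentially the standard proof: decompose $\mathrm{div}(f^{\Gamma})$ into its horizontal closure and a vertical part, use that principal Cartier divisors on $\mathcal{C}$ (including $\mathcal{C}_s=\mathrm{div}(\pi)$) lie in $\ker\rho$, compute $\rho$ of a vertical divisor via the self-intersection identity $\Gamma_i^2=-\sum_{j\neq i}(\Gamma_i\cdot\Gamma_j)$ to recognize it as $-\Delta$ of the coefficient function, and conclude by connectedness of $\Sigma(\mathcal{C})$. The paper does not give its own proof here but simply cites \cite[Theorem 3.3.1]{tropabelian}; what you wrote is exactly the argument that reference contains, down to the use of \cite[Lemma 3.3.1]{tropabelian} for the vertical-divisor computation.
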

\begin{proof}
See \cite[Theorem 3.3.1]{tropabelian}. 
\end{proof}

This then also quickly gives the \emph{Poincar\'{e}-Lelong formula}:

\begin{theorem}
{\bf[Poincar\'{e}-Lelong formula]}
Let $\mathcal{C}$ be a strongly semistable regular model of a curve $C$ with $f\in{K(\mathcal{C})}$.
 Let $\tilde{x}$ be an intersection point of an irreducible component $\Gamma$ with another irreducible component $\Gamma'$. Then
\begin{equation}
v_{\tilde{x}}(\overline{f^{\Gamma}})=\phi(v')-\phi(v),
\end{equation}
where $\phi$ is the Laplacian associated to $f$. 
\end{theorem}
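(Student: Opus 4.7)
The plan is to identify $v_{\tilde{x}}(\overline{f^{\Gamma}})$ with a local intersection multiplicity on $\mathcal{C}$ at $\tilde{x}$, and then to split this intersection according to the decomposition of $\mathrm{div}(f^{\Gamma})$ into its horizontal and vertical parts, with the vertical part controlled by Theorem \ref{MainThmVert}.

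First, I will observe that both sides of the claimed identity are invariant under replacing $\phi$ by $\phi + c$ for a constant $c$, so I may normalize $\phi(v) = \phi(\Gamma) = 0$. Under this normalization, Theorem \ref{MainThmVert} gives
\[V_{f^{\Gamma}} = \sum_{i} \phi(\Gamma_{i})\,\Gamma_{i}\]
as the vertical part of $\mathrm{div}(f^{\Gamma})$ on $\mathcal{C}$, with the coefficient of $\Gamma$ itself equal to zero (matching $v_{\Gamma}(f^{\Gamma}) = 0$). I then write $\mathrm{div}(f^{\Gamma}) = H + V_{f^{\Gamma}}$, where $H = \overline{\mathrm{div}_{\eta}(f)}$ is the horizontal closure from the generic fiber. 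Since $\Gamma$ does not appear in the support of $\mathrm{div}(f^{\Gamma})$, its restriction to $\Gamma$ is a well-defined Cartier divisor, locally cut out by $\overline{f^{\Gamma}}$, so that
\[(\mathrm{div}(f^{\Gamma}) \cdot \Gamma)_{\tilde{x}} = v_{\tilde{x}}(\overline{f^{\Gamma}}).\]

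Next, I will split this intersection at $\tilde{x}$ into horizontal and vertical pieces. The horizontal contribution $(H \cdot \Gamma)_{\tilde{x}}$ vanishes, since in the strongly semistable regular setting the horizontal divisor can be arranged to meet $\mathcal{C}_{s}$ only at smooth points of the components, away from the nodes (this can always be ensured by further blow-ups or finite base changes, in the spirit of the disjointly branched construction of Section \ref{DisBran3}). For the vertical contribution, the only component apart from $\Gamma$ meeting $\Gamma$ at $\tilde{x}$ is $\Gamma'$, and the regularity of $\mathcal{C}$ at $\tilde{x}$ (equivalently, $l(e) = 1$) forces $(\Gamma' \cdot \Gamma)_{\tilde{x}} = 1$. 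Hence
\[(V_{f^{\Gamma}} \cdot \Gamma)_{\tilde{x}} = \sum_{i} \phi(\Gamma_{i})(\Gamma_{i} \cdot \Gamma)_{\tilde{x}} = \phi(v'),\]
so that $v_{\tilde{x}}(\overline{f^{\Gamma}}) = \phi(v') = \phi(v') - \phi(v)$ under the normalization. Removing the normalization (which leaves both sides unchanged) yields the identity for arbitrary $\phi$ satisfying $\Delta(\phi) = \rho(\mathrm{div}_{\eta}(f))$.

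The main obstacle will be the horizontal bookkeeping: the equality $(H \cdot \Gamma)_{\tilde{x}} = 0$ relies on the standing assumption that the horizontal divisor avoids the nodes of $\mathcal{C}_{s}$, which is implicit in the strongly semistable regular setup used throughout the paper. Once that is granted, the vertical-side computation is a clean consequence of Theorem \ref{MainThmVert} combined with the regularity of $\mathcal{C}$ at the node, which pins down the local intersection number of the two adjacent components.
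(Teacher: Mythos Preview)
The paper gives no self-contained proof here, only citing \cite[Theorem 3.3.2]{tropabelian} and \cite[Theorem 5.15]{BPRa1}. Your argument---identify $v_{\tilde{x}}(\overline{f^{\Gamma}})$ with the local intersection number $(\mathrm{div}(f^{\Gamma})\cdot\Gamma)_{\tilde x}$, split into horizontal and vertical parts, and read off the vertical contribution from Theorem~\ref{MainThmVert} together with $(\Gamma'\cdot\Gamma)_{\tilde x}=1$---is the standard algebraic derivation and is correct in structure.

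The one point that needs sharpening is your handling of the horizontal term. You rightly flag $(H\cdot\Gamma)_{\tilde{x}}=0$ as the crux, but it is \emph{not} ``implicit in the strongly semistable regular setup'': regularity alone does not prevent a horizontal prime divisor from passing through a node. On the local model $R[x,t]/(xt-\pi)$ with $f=x-t$, one has $\overline{f^{\Gamma}}=-t$ and hence $v_{\tilde{x}}(\overline{f^{\Gamma}})=1$, while $v_{\Gamma}(f)=v_{\Gamma'}(f)=0$ forces $\phi(v')-\phi(v)=0$. So the identity genuinely fails without the extra hypothesis that no point of $\mathrm{Supp}(\mathrm{div}_{\eta}(f))$ specializes to $\tilde{x}$. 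This is the hypothesis actually in force in the cited sources: in the language of \cite{BPRa1}, the slope formula computes the \emph{outgoing} slope of $-\log|f|$ at $v$, which agrees with $\phi(v')-\phi(v)$ precisely when that function is linear along the whole edge, i.e., when no zero or pole of $f$ retracts to its interior. Your proposed remedy of blowing up changes the model and hence the graph, so it does not recover the statement on the given $\mathcal{C}$; the correct fix is simply to record the hypothesis explicitly.
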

\begin{proof}
See \cite[Theorem 3.3.2]{tropabelian} and \cite[Theorem 5.15, part 5]{BPRa1}. 
\end{proof}

\begin{lemma}\label{NormalizedLaplacianLemma}
Let $f\in{K(C)}$ and let $\mathcal{C}$ be a strongly semistable regular model for $C$. There is then a unique $\phi\in\mathcal{M}(\Sigma(\mathcal{C}))$ such that $v_{\mathfrak{p}}(f)=\phi(w_{\mathfrak{p}})$ for every $w_{\mathfrak{p}}\in{V(\Sigma(\mathcal{C}))}$, where $\mathfrak{p}$ is the generic point of the irreducible component corresponding to $w_{\mathfrak{p}}$.
\end{lemma}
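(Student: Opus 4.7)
The statement is essentially a matter of unpacking definitions, so my plan is to verify existence and uniqueness directly, using the regularity of the model to ensure that each $v_{\mathfrak{p}}(f)$ is a well-defined integer.

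For existence, I would argue as follows. Fix a vertex $w_{\mathfrak{p}} \in V(\Sigma(\mathcal{C}))$, corresponding to an irreducible component $\Gamma$ of $\mathcal{C}_s$ with generic point $\mathfrak{p}$. Since $\mathcal{C}$ is regular and $\Gamma$ is a codimension one closed subscheme of $\mathcal{C}$, the local ring $\mathcal{O}_{\mathcal{C},\mathfrak{p}}$ is a one-dimensional regular local ring, hence a discrete valuation ring, with fraction field $K(\mathcal{C}) = K(C)$. Thus the normalized valuation $v_{\mathfrak{p}}\colon K(C)^{*}\to\mathbb{Z}$ is defined on every nonzero $f\in K(C)$, and we may set
\begin{equation}
\phi(w_{\mathfrak{p}}) := v_{\mathfrak{p}}(f)\in\mathbb{Z}
\end{equation}
for each of the finitely many vertices $w_{\mathfrak{p}}\in V(\Sigma(\mathcal{C}))$. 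This defines an element of $\mathcal{M}(\Sigma(\mathcal{C}))$.

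For uniqueness, note that $V(\Sigma(\mathcal{C}))$ is a finite set and an element of $\mathcal{M}(\Sigma(\mathcal{C}))$ is by definition a $\mathbb{Z}$-valued function on this set. Any $\phi'\in\mathcal{M}(\Sigma(\mathcal{C}))$ satisfying $\phi'(w_{\mathfrak{p}}) = v_{\mathfrak{p}}(f)$ for every vertex therefore agrees pointwise with $\phi$, hence equals $\phi$.

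The only real subtlety — and this is where regularity and strong semistability are essential — is the implicit assertion that the valuations $v_{\mathfrak{p}}$ are integer-valued on $K(C)^{*}$; without regularity of $\mathcal{C}$, $\mathcal{O}_{\mathcal{C},\mathfrak{p}}$ need not be a DVR. Beyond checking this, there is nothing else to prove: the lemma amounts to the observation that the assignment $w_{\mathfrak{p}}\mapsto v_{\mathfrak{p}}(f)$ is a well-defined element of $\mathcal{M}(\Sigma(\mathcal{C}))$, which will serve as the canonical choice of representative (differing from the Laplacian of Theorem \ref{MainThmVert} by an additive constant) used to define the normalized Laplacian in Section \ref{NormalizedLaplacianSection}.
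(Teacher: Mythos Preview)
Your proof is correct for the lemma as literally stated: existence and uniqueness of a $\mathbb{Z}$-valued function on the finite vertex set with prescribed values is indeed tautological once you know each $v_{\mathfrak{p}}(f)$ is an integer, and you correctly identify regularity as the reason $\mathcal{O}_{\mathcal{C},\mathfrak{p}}$ is a DVR.

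The paper's argument, however, is after something more than the bare statement records. By invoking Theorem~\ref{MainThmVert} (first for $f$ with $v_{\mathfrak{p}}(f)=0$, then reducing the general case to this one via $f^{\Gamma}=f/\pi^{k}$), the paper shows that the function $w_{\mathfrak{q}}\mapsto v_{\mathfrak{q}}(f)$ actually satisfies $\Delta(\phi)=\rho(\mathrm{div}_{\eta}(f))$, i.e.\ it is a Laplacian of $f$ in the sense introduced just before. This is the nontrivial content that justifies the name ``normalized Laplacian'' in Definition~\ref{NormalizedLaplacian} and underlies its use throughout Section~\ref{NormalizedLaplacianSection}. You allude to this at the end (``differing from the Laplacian of Theorem~\ref{MainThmVert} by an additive constant'') but do not prove it; the paper's route via Theorem~\ref{MainThmVert} is precisely what supplies that missing link. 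Your direct argument is cleaner for the stated lemma, while the paper's detour buys the compatibility with the Laplacian formalism that is tacitly relied upon later.
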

\begin{proof}
There can only be at most one such function $\phi:V(\Sigma(\mathcal{C}))\rightarrow{\mathbb{Z}}$, so the unicity is clear. Let $\mathfrak{p}$ be the generic point of any irreducible component $\Gamma$ in the special fiber. If $v_{\mathfrak{p}}(f)=0$, then the Lemma follows from Theorem \ref{MainThmVert}. For the general case, let $k$ be such that $v_{\mathfrak{p}}(f)=k$ and let $f^{\Gamma}=\dfrac{f}{\pi^{k}}$. We let $\phi$ be any function such that $v_{\mathfrak{q}}(f^\Gamma)=\phi(w_{\mathfrak{q}})$ for $\mathfrak{q}$ a generic point of any irreducible component in $\mathcal{C}_{s}$. The function $\phi'=\phi(\cdot{})+k$ then solves the problem in the Lemma. 
\end{proof}

%
\begin{mydef}\label{NormalizedLaplacian}
Let $f\in{K(C)}$ and let $\mathcal{C}$ be a strongly semistable regular model for $C$. 
The {\bf{normalized Laplacian}} corresponding to $f$ is then the unique function $\phi_{f}$ such that $v_{\mathfrak{p}}(f)=\phi_{f}(w_{\mathfrak{p}})$ for every $w_{\mathfrak{p}}\in{V(\Sigma(\mathcal{C}))}$. 
\end{mydef}

\subsection{Normalized Laplacians under finite extensions and inequalities}\label{NormalizedLaplacianSection}

We would now like to study the normalized Laplacians introduced in Definition \ref{NormalizedLaplacian} under finite extensions $K\subset{K'}$. Let $R'\subset{K'}$ be a discrete valuation ring dominating $R$ and let us suppose that the extension $R\subset{R'}$ is ramified of degree $n>{1}$. We let $\pi_{K'}\in{R'}$ be a uniformizer and first set $v_{K'}(\pi_{K'})=1$. Note that this valuation does \emph{not} extend $v_{K}$. The model $\mathcal{C}\times_{R}{R'}$ is not regular anymore by assumption on $n$, so we take a desingularization (see \cite[Chapter 9, Example 3.53]{liu2}) and obtain a morphism $\mathcal{C}'\rightarrow{\mathcal{C}\times_{R}{R'}}$, where $\mathcal{C}'$ is regular. The intersection graph of $\mathcal{C}'$ is obtained from that of $\mathcal{C}$ by subdividing every edge into $n$ edges and $n-1$ components.

Let $f\in{K(C)}\subset{K(C'):=K(C)\otimes_{K}{K'}}$. We have a natural inclusion of vertex sets $V(\Sigma(\mathcal{C}))\rightarrow{V(\Sigma(\mathcal{C}'))}$ since the morphism $\mathcal{C}'\rightarrow{\mathcal{C}}$ is an isomorphism outside the set of intersection points (corresponding to the edges). For any function $\phi\in{\mathcal{M}(\Sigma(\mathcal{C}'))}$, we thus obtain by restriction an element $\phi|_{\Sigma(\mathcal{C})}\in\mathcal{M}(\Sigma(\mathcal{C}))$.
\begin{lemma}\label{ScalingLemma}
Let $f\in{K(C)}\subset{K(C')}$ and consider the normalized Laplacians $\phi'_{f}\in\mathcal{M}(\Sigma(\mathcal{C}'))$ and $\phi_{f}\in{\mathcal{M}(\Sigma(\mathcal{C}))}$. We then have
\begin{equation}
\phi'_{f}|_{\Sigma(\mathcal{C})}=n\cdot\phi_{f},
\end{equation}
where $n$ is the ramification degree of $R\subset{R'}$. Furthermore, the values of $\phi'_{f}$ on other vertices are obtained by linearly interpolating the values on $\Sigma(\mathcal{C})$. That is, for any edge $e=vw\in{\Sigma(\mathcal{C})}$ subdivided into new vertices $\{v_{1},v_{2},...,v_{n-1}\}$ with generic points $\mathfrak{p}_{i}\in\mathcal{C}'_{s}$, we have
\begin{equation}
v_{\mathfrak{p}_{i}}(f)=(\phi_{f}(v')-\phi_{f}(v))\cdot{i}+n\cdot{\phi_{f}(v)}.
\end{equation}
\end{lemma}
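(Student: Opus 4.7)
I would prove the two claims in turn, using the normalization of discrete valuations for the scaling and a harmonicity argument via Poincar\'e--Lelong for the linear interpolation.

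For $\phi'_f|_{\Sigma(\mathcal{C})} = n\cdot\phi_f$, let $w_\mathfrak{p} \in V(\Sigma(\mathcal{C}))$ correspond to the generic point $\mathfrak{p}$ of an irreducible component $\Gamma \subseteq \mathcal{C}_s$. Since $\mathcal{C}' \to \mathcal{C}\times_R R'$ is an isomorphism away from the intersection points, $\Gamma$ has a unique strict transform in $\mathcal{C}'_s$ with generic point $\mathfrak{p}'$. The normalizations $v_\mathfrak{p}(\pi_K) = 1$ and $v_{\mathfrak{p}'}(\pi_{K'}) = 1$, together with $\pi_K = u\pi_{K'}^n$ for a unit $u \in (R')^*$, force $v_{\mathfrak{p}'}|_{K(C)^*} = n\cdot v_\mathfrak{p}$. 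Evaluating at $f$ and invoking the definition of the normalized Laplacian gives $\phi'_f(w_{\mathfrak{p}'}) = n\cdot\phi_f(w_\mathfrak{p})$.

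For the interpolation on each subdivided edge, I would apply Theorem \ref{MainThmVert} (the Poincar\'e--Lelong/vertical-divisor identity) on $\mathcal{C}'$: at an intermediate vertex $v_i$ (generic point $\mathfrak{p}_i$ of an exceptional component $\Gamma_i$ of $\mathcal{C}'\to\mathcal{C}\times_R R'$) one has $\Delta(\phi'_f)(v_i) = \rho(\mathrm{div}_\eta(f))(v_i) = \bigl(\overline{\mathrm{div}_\eta(f)}\cdot \Gamma_i\bigr)_{\mathcal{C}'}$. The exceptional divisors $\Gamma_i$ are contracted to the intersection point $z$, so provided the horizontal closure $\overline{\mathrm{div}_\eta(f)}$ does not pass through $z$, this intersection vanishes. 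Discrete harmonicity of $\phi'_f$ at the interior vertices of the chain then forces it to be affine-linear along the subdivided edge; combined with the boundary values $\phi'_f(v) = n\phi_f(v)$ and $\phi'_f(w) = n\phi_f(w)$ from the scaling, the constant slope equals $\phi_f(w) - \phi_f(v)$, yielding exactly the stated formula.

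The main obstacle is the hypothesis that $\overline{\mathrm{div}_\eta(f)}$ avoids the intersection points of $\mathcal{C}$. If this fails, horizontal contributions show up at the intermediate vertices and $\phi'_f$ is no longer harmonic there. The standard remedy is to pre-refine $\mathcal{C}$ by finitely many blow-ups along the offending horizontal sections so that all zeros and poles of $f$ reduce to smooth points of $\mathcal{C}_s$; the scaling assertion ensures the values of $\phi_f$ at the original vertices are unchanged, so the interpolation formula computed on the refined model descends to the original. Alternatively, one can perform a direct local valuation computation using the toric description of the $A_{n-1}$ singularity resolution, in which the exceptional components carry valuations $v_{\mathfrak{p}_i}(x) = n-i$ and $v_{\mathfrak{p}_i}(y) = i$ (with $v_{\mathfrak{p}_i}(\pi_{K'})=1$), yielding the same formula.
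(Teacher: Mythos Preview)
Your approach is exactly the paper's: the scaling follows from the normalization $v_{K'}(\pi_{K'})=1$, and the interpolation follows from the claim that $\rho'(\mathrm{div}_\eta(f))$ is supported on the image of $\Sigma(\mathcal{C})$ in $\Sigma(\mathcal{C}')$, so that $\phi'_f$ is harmonic at the new vertices. You are right to isolate the hypothesis that $\overline{\mathrm{div}_\eta(f)}$ avoids the nodes of $\mathcal{C}_s$; the paper asserts the support claim without justification and is tacitly using the same hypothesis.

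Where your proposal overreaches is in suggesting that either workaround recovers the lemma as stated. The interpolation claim is actually false without that hypothesis. Take $\mathcal{C}=\mathrm{Proj}\bigl(R[X,Y,W]/(XY-\pi W^2)\bigr)$ (one edge, strongly semistable regular) and $f=X/W-Y/W\in K(C)$. Then $\phi_f\equiv 0$ on $\Sigma(\mathcal{C})$, so the formula predicts $v_{\mathfrak{p}_1}(f)=0$; but the degree-two zero $\{x^2=\pi\}$ of $f$ reduces to the node, and over $K'=K(\pi^{1/2})$ one computes $v_{\mathfrak{p}_1}(f)=1$. Your refinement fix cannot repair this: any blow-up of $\mathcal{C}$ separating that horizontal divisor from the node inserts a new vertex on $e$ at which $\phi_f$ is positive (and the exceptional component has multiplicity $2$ in the special fiber, so one even leaves the strongly semistable framework), hence the refined interpolation does not descend to linear interpolation between the original endpoints. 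Your toric computation likewise only yields linearity when $f$ is locally at $z$ a unit times a monomial in the two branch coordinates, which is exactly the avoidance hypothesis restated. The honest fix is to add that hypothesis to the lemma; this costs nothing in the applications, since one first chooses $\mathcal{C}$ so that the divisors of $\Delta$, $c_4$, $j$ reduce to smooth points.
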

\begin{proof}
The first claim follows from our assumption $v_{K'}(\pi_{K'})=1$. For the second claim, let $\rho'$ be the specialization map on $\mathcal{C}'$. The support of $\rho'(\mathrm{div}_{\eta}(f))=\rho(\mathrm{div}_{\eta}(f))$ lies in the image of $\Sigma(\mathcal{C})$ in $\Sigma(\mathcal{C}')$, so the sum of the "slopes" (as in Equation \ref{LaplacianDefinition}) of the normalized Laplacian cannot be nonzero at the new vertices $v\in\Sigma(\mathcal{C}')\backslash{\Sigma(\mathcal{C})}$. We thus see that the values of $\phi'_{f}$ on new vertices can only be obtained by linearly interpolating the values of $\phi_{f}$ on old vertices. The slope of this "line" is given by $\phi_{f}(v')-\phi_{f}(v)$ and the constant value for $i=0$ is $\phi_{f}(v)$, so we obtain the formula of the Lemma.  
\end{proof}

Lemma \ref{ScalingLemma} tells us that the normalized Laplacian will be invariant if we redefine our valuation $v_{K'}$. Let us set  $v_{K'}(\pi_{K'})=1/n$, where $n$ is the ramification degree of $R\subset{R'}$. The valuation of $v_{\mathfrak{p}}(f)$ is then scaled by $n$ as well, so we obtain the formula
\begin{equation}
v_{\mathfrak{p}_{i}}(f)=(\phi_{f}(v')-\phi_{f}(v))\cdot{i/n}+\phi_{f}(v).
\end{equation}

That is, if we consider the edge $e$ as an interval $[0,1]$, then the value of the normalized Laplacian at the new components corresponding to the $\mathfrak{p}_{i}$ is given by evaluating the function
\begin{equation}\label{ExtendedEquations}
\overline{\phi}_{f,e}(x)=(\phi_{f}(v')-\phi_{f}(v))\cdot{x}+\phi_{f}(v)
\end{equation}
at the points $i/n$ for $i\in\{0,1,...,n\}$. Note that this formula for $\overline{\phi}_{f,e}$ can be used for any $n$ (and thus for any finite extension). By viewing every edge $e=vw$ as an interval $[0,1]$ and identifying the endpoints for vertices, one obtains what is often known as the {\bf{metrized graph}} associated to $\mathcal{C}$.  See \cite[Section 1.4]{bakerfaber} for more on this subject. The metrized graph associated to $\mathcal{C}$ will be denoted by $\overline{\Sigma}(\mathcal{C})$. Using Equation \ref{ExtendedEquations}, one can then extend the normalized Laplacian $\phi_{f}$ to a continuous piecewise linear function $\overline{\phi}_{f}:\overline{\Sigma}(\mathcal{C})\rightarrow{\mathbb{R}}$. The function $\overline{\phi}_{f}$ coincides with $\phi_{f}$ at the original vertices $V(\Sigma(\mathcal{C}))$ and it gives the valuation of $f$ at new vertices obtained from desingularizing over finite extensions by the considerations in and after Lemma \ref{ScalingLemma}. %
\begin{mydef}
We call the continuous piecewise linear function $\overline{\phi}_{f}:\overline{\Sigma}(\mathcal{C})\rightarrow{\mathbb{R}}$ defined above the {\bf{extended normalized Laplacian}} of $f$. 
\end{mydef}



We now define \emph{inequalities} for normalized Laplacians. These inequalities will be used in defining reduction types for elliptic curves, see Definition \ref{ReductionType}. For the remainder of this section, we will assume that valuations on extensions of the base field $K$ extend the valuation of $K$. 

\begin{mydef}\label{InequalitiesDefinition}{\bf{(Inequalities for normalized Laplacians)}}
For any $f\in{K(C)}$, let $\phi_{f,T}$ be its normalized Laplacian and let $\overline{\phi}_{f}:\overline{\Sigma}(\mathcal{C})\rightarrow{\mathbb{R}}$ be the extended normalized Laplacian on the metrized graph associated to a strongly semistable regular model $\mathcal{C}$. Let $T$ be any subgraph of $\Sigma(\mathcal{C})$ and let $\overline{T}$ be its image in $\overline{\Sigma}(\mathcal{C})$. 
We write $\phi_{f,T}>0$ if $\overline{\phi}_{f}(x)>0$ for every $x\in\overline{T}$. We similarly define $\phi_{f,T}=0$, $\phi_{f,T}\geq{0}$, $\phi_{f,T}<0$ and $\phi_{f,T}\leq{0}$. 
\end{mydef}

\subsection{Edges and codimension one phenomena}

In this section, we recall a result obtained in \cite{tropabelian}, saying that one can recover the decomposition group of an edge by subdividing the edge and then considering the ramification of the corresponding components. 

Consider a disjointly branched Galois morphism $\phi:\mathcal{C}\rightarrow{\mathcal{D}}$ with $x\in\mathcal{C}$ an intersection point with length $n_{x}$ and $y$ its image in $\mathcal{D}$ with length $n_{y}$. We will denote the Galois group by $G$.
Let $y$ be an intersection point in $\mathcal{D}$, with corresponding components $\Gamma_{0}$ and $\Gamma_{n}$. Here $n$ is the length of $y$.
We now take a regular subdivision $\mathcal{D}_{0}$ of $\mathcal{D}$ in $y$. That is, we have a model $\mathcal{D}_{0}$ with a morphism $\psi: \mathcal{D}_{0}\rightarrow{\mathcal{D}}$ that is an isomorphism outside $y$ and the pre-image $\psi^{-1}\{y\}$ of $y$ consists of $n-1$ projective lines $\Gamma_{i}$. Here, the projective lines are labeled such that $\Gamma_{i}$ intersects $\Gamma_{i+1}$ in one point: $y_{i,i+1}$. Furthermore, we have that $\Gamma_{1}$ intersects an isomorphic copy of the original component $\Gamma_{0}$ in $y_{0,1}$ and likewise $\Gamma_{n-1}$ intersects an isomorphic copy of the original component $\Gamma_{n}$ in $y_{n-1,n}$, see \cite[Chapter 8, Example 3.53. and Chapter 9, Lemma 3.21]{liu2} 
for the details. 

 We now take the normalization $\mathcal{C}_{0}$ of $\mathcal{D}_{0}$ in $K(\mathcal{C})$. By virtue of the universal property for normalizations, we have a natural morphism
\begin{equation}
\mathcal{C}_{0}\rightarrow{\mathcal{C}}
\end{equation}
that is an isomorphism outside $\phi^{-1}(y)$.

Taking the tamely ramified extension $K\subset{K'}$ of order $\text{lcm }(|I_{\Gamma_{i}}|)$, we obtain a new model $\mathcal{D}'_{0}=\mathcal{D}_{0}\times_{\text{Spec}(R)}{\text{Spec}(R')}$ over $R'$, which is the normalization of $\mathcal{D}_{0}$ in $K'(\mathcal{D})$.
 Taking the normalization $\mathcal{C}'_{0}$ of this model inside $K'(\mathcal{C})$, we then naturally obtain morphisms
\begin{equation}
\mathcal{C}'_{0}\rightarrow{\mathcal{C}_{0}}\rightarrow{\mathcal{C}}.
\end{equation}
Here the first morphism is finite and the second one is birational. Note that by \cite[Chapter 10, Proposition 4.30]{liu2}, we have that $\mathcal{C}'_{0}$ is again semistable and that $G$ naturally acts on $\mathcal{C}_{0}$ and  $\mathcal{C}'_{0}$ such that $\mathcal{C}_{0}/G=\mathcal{D}_{0}$ and $\mathcal{C}'_{0}/G=\mathcal{D}'_{0}$ (which follows from the fact that $G$ acts naturally on any normalization, see \cite[Proposition 4.2.4]{tropabelian}). 

We now wish to study the inertia groups of the various points in $\mathcal{C}'_{0}$, $\mathcal{C}_{0}$ and $\mathcal{C}$. To do this, we will introduce the notion of a "\emph{chain}".

\begin{mydef}
Let $y_{i,i+1}$ and $y'_{i,i+1}$ be the intersection points in $\mathcal{D}_{0}$ and $\mathcal{D}'_{0}$ respectively that map to $y\in\mathcal{D}$ under the natural morphism. Similarly, let $y_{i}$ and $y'_{i}$ be the generic points of the components in $\mathcal{D}_{0}$ and $\mathcal{D}'_{0}$ that map to $y$. Here the generic points are labeled such that $y_{i,i+1}$ is a specialization of both $y_{i}$ and $y_{i+1}$. 
A {\bf{chain}} lying above these points is a collection of generic points $x_{i}$ in the special fiber of $\mathcal{C}_{0}$ or $\mathcal{C}'$ and closed points $x_{i,i+1}$ in $\mathcal{C}_{0}$ or $\mathcal{C}'_{0}$ such that:
\begin{enumerate}
\item $x_{i,i+1}$ is a specialization of both $x_{i}$ and $x_{i+1}$,
\item The $x_{i,i+1}$ map to $y_{i,i+1}$,
\item The $x_{i}$ map to $y_{i}$.
\end{enumerate}
For the remainder of this section, we will refer to these simply as a "{\it{chain}}".
\end{mydef}  

\begin{lemma}\label{ChainLemma}
For every intersection point $x\in\mathcal{C}$, there is only one chain in $\mathcal{C}_{0}$ and in $\mathcal{C}'_{0}$ lying above it.
\end{lemma}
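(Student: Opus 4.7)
The plan is to analyze the dual subgraph of the preimage of $y$ in $\mathcal{C}_{0}$ and show that its connected components are paths which are in bijection with the set $\phi^{-1}(y)\subset\mathcal{C}$ of intersection points above $y$. Uniqueness of the chain above each $x$ will then be immediate.

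I would first study a component $C$ of $\mathcal{C}_{0}$ lying above one of the exceptional components $\Gamma_{i}$ with $1\leq i\leq n-1$. Because $\mathcal{C}_{0}\to\mathcal{D}_{0}$ is finite, tame and Galois, and because the horizontal branch locus of $\phi$ is disjoint from $y$ by the disjoint branching hypothesis while the generic point of $\Gamma_{i}$ is unramified, the restriction $C\to\Gamma_{i}\cong\mathbb{P}^{1}$ is a tame connected Galois cover branched at most at the two points $y_{i-1,i}$ and $y_{i,i+1}$. Any such cover is cyclic of the form $z\mapsto z^{d}$, so $C\cong\mathbb{P}^{1}$ meets its neighbours in exactly one point above $y_{i-1,i}$ and one point above $y_{i,i+1}$. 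In the dual subgraph of the preimage of $y$ in $\mathcal{C}_{0}$, whose vertices are the components lying above $\Gamma_{0},\dots,\Gamma_{n}$ and whose edges are the intersection points lying above $y_{0,1},\dots,y_{n-1,n}$, every vertex above an exceptional component therefore has degree exactly two.

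Vertices above $\Gamma_{0}$ or $\Gamma_{n}$ play the role of endpoints, with degree equal to the number of intersection points above $y$ that they carry, so every connected component of the dual subgraph is a path starting at a vertex above $\Gamma_{0}$ and terminating at a vertex above $\Gamma_{n}$. Such a path is precisely a chain in the sense of the definition. Each exceptional component occurring in the path contracts under $\mathcal{C}_{0}\to\mathcal{C}$ to a single point of $\mathcal{C}$, so the closed points $x_{0,1},\dots,x_{n-1,n}$ along the chain all project to a common $x\in\phi^{-1}(y)$. Conversely, since every interior vertex of the dual subgraph has degree exactly two, it lies on a unique path, which yields the bijection between chains above $y$ in $\mathcal{C}_{0}$ and elements of $\phi^{-1}(y)$. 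In particular, above each $x$ there is exactly one chain.

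For $\mathcal{C}'_{0}$, the tame base change $R\subset R'$ only inserts additional intermediate vertices into each edge of the dual subgraph, preserving both the path structure and the bijection. The main obstacle in the plan is the classification step identifying each component above an exceptional $\Gamma_{i}$ as a cyclic cover with exactly one ramification point over each of $y_{i-1,i}$ and $y_{i,i+1}$; this rests on the tameness of $\phi$ combined with the disjoint branching hypothesis to exclude any other branch points on $\Gamma_{i}$.
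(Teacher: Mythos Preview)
The paper states this lemma without proof, presumably deferring to \cite{tropabelian}, so there is no explicit argument to compare against. Your overall strategy---analyzing the dual subgraph above $y$ and showing its connected components are paths in bijection with $\phi^{-1}(y)$---is sound, and the use of Zariski's main theorem (connectedness of fibers of the proper birational map $\mathcal{C}_{0}\to\mathcal{C}$) to match paths with intersection points is the right idea.

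There is, however, a genuine misstatement in your key step. You assert that for $\mathcal{C}_{0}\to\mathcal{D}_{0}$ ``the generic point of $\Gamma_{i}$ is unramified''. This is false in general: the inertia group $I_{x_{i}}$ at a component over $\Gamma_{i}$ is exactly what Theorem~\ref{SwitchTheorem} computes, namely $|I_{x_{i}}|=|I_{x}|/\gcd(i,|I_{x}|)$, which is nontrivial whenever the original edge is ramified. The very reason one passes to $\mathcal{C}'_{0}$ is to kill this vertical ramification. Your conclusion that $C\to\Gamma_{i}$ is a tame Galois cover of $\mathbb{P}^{1}$ branched at most at two points is nonetheless correct, but the Galois group is $D_{x_{i}}/I_{x_{i}}$ rather than the full decomposition group; the argument should be phrased accordingly.

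A second, related subtlety: since $\mathcal{C}_{0}$ need not be semistable, it is not automatic that at each closed point above $y_{i,i+1}$ only two components meet, so the ``dual graph'' picture requires justification. One clean route is to prove the lemma first for $\mathcal{C}'_{0}$ (where your argument works verbatim, as $\mathcal{C}'_{0}$ is semistable with no vertical ramification) and then deduce the $\mathcal{C}_{0}$ case via the natural finite map $\mathcal{C}'_{0}\to\mathcal{C}_{0}$, which is a bijection on components and collapses each chain in $\mathcal{C}'_{0}$ onto a chain in $\mathcal{C}_{0}$. Alternatively, one can invoke the local classification of tame cyclic covers of $R[[u,v]]/(uv-\pi)$ \'etale on the generic fiber to see directly that the special fiber has exactly two branches at each such point. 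Your final paragraph on $\mathcal{C}'_{0}$ is also slightly off: the base change $\mathcal{D}_{0}\times_{R}R'$ does not insert new vertices but merely lengthens each edge; it is the subsequent normalization in $K'(\mathcal{C})$ that produces $\mathcal{C}'_{0}$.
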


For any intersection point $x$, we thus have a natural chain in $\mathcal{C}_{0}$ and $\mathcal{C}'_{0}$ associated to it. We then have the following Theorem regarding this chain.

\begin{theorem}\label{SwitchTheorem}
Let $x_{i}$ be the generic points of the unique chain in $\mathcal{C}_{0}$ associated to $x$ and let $I_{x_{i}}$ be their inertia groups. Then 
\begin{equation}
|I_{x_{i}}|=\dfrac{|I_{x}|}{\gcd(i,|I_{e}|)}.
\end{equation}
For $i$ such that $\gcd(i,|I_{x}|)=1$, we have that 
\begin{equation}
I_{x_{i}}=I_{x}.
\end{equation}
\end{theorem}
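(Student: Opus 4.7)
The plan is to work locally at $x$, put the tame Galois action into a standard normal form on the completed local ring, translate the chain components into explicit monomial valuations, and then compute the ramification indices of those valuations by a direct Abhyankar-style calculation.

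First, setting $m := |I_x|$, I use the hypothesis that $\phi$ is tame and disjointly branched to put the completed local rings into the normal form
\begin{equation*}
\hat{\mathcal{O}}_{\mathcal{D},y} \cong R[[s,t]]/(st - \pi^{n_y}), \qquad \hat{\mathcal{O}}_{\mathcal{C},x} \cong R[[u,v]]/(uv - \pi^{n_x}),
\end{equation*}
with $s = u^m$ and $t = v^m$ (up to units) and a generator of $I_x$ acting by $u \mapsto \zeta u$, $v \mapsto \zeta^{-1} v$ for a primitive $m$-th root of unity $\zeta$. Computing invariants on the nose forces the relation $n_y = m \cdot n_x$, so in particular $|I_e| = m$ in the formula being proved.

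Next I identify the chain points with valuations on the function fields. The generic point of $\Gamma_i \subset \mathcal{D}_0$ corresponds to the monomial valuation $v^D_i$ on $K(\mathcal{D})$ normalized by $v^D_i(\pi) = 1$, $v^D_i(s) = i$, $v^D_i(t) = n_y - i$; the generic point $x_i$ of the chain component above $\Gamma_i$ corresponds to an extension $v^C_i$ of $v^D_i$ to $K(\mathcal{C})$. Writing $e_i$ for the ramification index of $v^C_i / v^D_i$ and using $s = u^m$, any such extension must satisfy $v^C_i(u) = e_i \cdot i/m$ and similarly $v^C_i(v) = e_i(n_y - i)/m$. The congruence $n_y \equiv 0 \pmod{m}$ reduces both integrality constraints to the single condition $m \mid e_i i$, whose minimal positive solution is $e_i = m/\gcd(i,m)$; Abhyankar's lemma in the tame setting identifies this minimum with the actual ramification index, and hence with $|I_{x_i}|$, giving the main formula $|I_{x_i}| = |I_x|/\gcd(i,|I_x|)$.

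For the second assertion, I would observe that the birational morphism $\mathcal{C}_0 \to \mathcal{C}$ is $G$-equivariant and contracts the entire chain over $x$ to the single closed point $x$; hence any element of $G$ stabilizing $x_i$ also stabilizes $x$, yielding $I_{x_i} \subseteq I_x$ (decomposition and inertia coincide here because $k$ is algebraically closed). When $\gcd(i,|I_x|) = 1$ the order formula forces $|I_{x_i}| = |I_x|$, and the inclusion then upgrades to equality. The main obstacle is securing the normal form in the first step: one must show that the tame $I_x$-action on $\hat{\mathcal{O}}_{\mathcal{C},x}$ is, after a coordinate change, diagonal in $u,v$ with quotient the nodal ring on the downstairs side. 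This is essentially Abhyankar's lemma for a two-dimensional regular local ring together with Kummer theory, and is the place where the disjointly branched hypothesis genuinely enters; everything after that step is routine valuation-theoretic bookkeeping.
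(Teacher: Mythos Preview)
The paper does not actually prove this theorem: its entire proof is a citation to \cite[Theorem 5.5.1 and Proposition 5.5.2]{tropabelian}, so there is no in-text argument to compare against. Your proposal therefore has to be judged on its own merits, and it is sound.

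Your strategy---diagonalize the tame cyclic $I_x$-action on $\hat{\mathcal O}_{\mathcal C,x}\simeq R[[u,v]]/(uv-\pi^{n_x})$, identify the exceptional components of the downstairs desingularization with the monomial valuations $v^D_i(s)=i$, $v^D_i(t)=n_y-i$, and then read off $e_i=m/\gcd(i,m)$ via Abhyankar---is exactly the right computation, and it is almost certainly what the cited reference does as well. Two small points worth tightening. First, the diagonalizability of the $I_x$-action is not just Kummer theory: you need that a generator of $I_x$ preserves (rather than swaps) the two branches at $x$, which follows because $\mathcal C\to\mathcal D$ sends components to components and the two components through $y$ are distinct in a strongly semistable model; only then can you linearize to $u\mapsto\zeta u$, $v\mapsto\zeta^{-1}v$ and conclude $s=u^m$, $t=v^m$ up to units. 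Second, for the inclusion $I_{x_i}\subseteq I_x$ you should say explicitly that the chain components in $\mathcal C_0$ contract to $x$ (not merely to some point of $\phi^{-1}(y)$); this is precisely what the paper's Lemma~\ref{ChainLemma} on uniqueness of the chain guarantees. With those two clarifications your argument is complete.
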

\begin{proof}
See \cite[Theorem 5.5.1]{tropabelian} and \cite[Proposition 5.5.2]{tropabelian}.
\end{proof}

In other words, the inertia groups for the components in a minimal desingularization $\mathcal{D}_{0}$ give the inertia group for the edge $x$. Since the residue field $k$ is algebraically closed, we have that the inertia group of an edge is equal to the decomposition group, so this will give the splitting behavior of an edge in a finite Galois extension.

\section{Torsion points on elliptic curves and Galois representations}

In this section, we review some material on Galois extensions $K(C)\subset{K(C)(E[N])}$ arising from torsion points on elliptic curves $E/K(C)$. They can be seen as analogues of the field extensions arising for the modular curves \begin{equation}
X(N)\rightarrow{X_{1}(N)}\rightarrow{X_{0}(N)}\rightarrow{\mathbb{P}^{1}_{j}}.
\end{equation}
These extensions give rise to representations
\begin{equation}
\rho:G\rightarrow{\mathrm{SL}_{2}(\mathbb{Z}/N\mathbb{Z})},
\end{equation}
given by letting the Galois group $G$ act on a basis $\{P_{1},P_{2}\}$ of the $N$-torsion of $E$. 
To these Galois extensions $K(C)\subseteq{K(C)(E[N])}$, we will assign disjointly branched morphisms $\mathcal{C}_{N}\rightarrow{\mathcal{C}}$, as defined in Section \ref{DisBran3}. We can thus associate Galois morphisms of metrized complexes $\Sigma(\mathcal{C}_{N})\rightarrow{\Sigma(\mathcal{C})}$ to these torsion points. These morphisms will be the main focus in this paper. 


\subsection{Galois representations}\label{Torsionextensions}

Let $E$ be an elliptic curve over the function field $K(C)$ of a fixed curve $C$. We will denote its identity element by $\mathcal{O}$. Furthermore, we will fix an $N\in\mathbb{N}$ for this section and assume that $\mu_{N}\in{K}$. $E$ is naturally an abelian variety (see \cite[Chapter 10, Lemma 2.8.]{liu2} or \cite{Milne1}) over $K(C)$ and we can consider its group of $N$-torsion points over $\overline{K(C)}$:
\begin{equation}
E[N]=\{P\in{E(\overline{K(C)})}:N\cdot{}P=\mathcal{O}\}.
\end{equation}
As a group, we have that $E[N]$ is isomorphic to $(\mathbb{Z}/N\mathbb{Z})^{2}$. Furthermore, there is an action of the absolute Galois group $G_{K(C)}$ on $E[N]$, turning it into a $G_{K(C)}$-module. We choose a basis $\{P_{1},P_{2}\}$ for $E[N]$ and then obtain a homomorphism
\begin{equation}
\rho:G_{K(C)}\rightarrow{\text{GL}_{2}(\mathbb{Z}/N\mathbb{Z})}.
\end{equation}
By the assumption that $\mu_{N}\in{K}$, we find using the Weil pairing that the image of $\rho$ lies in $\text{SL}_{2}(\mathbb{Z}/N\mathbb{Z})$, see \cite[Chapter III, Section 8]{silv1} 
and \cite[Page 42]{ModForms1}. We will denote the fixed field of the kernel of $\rho$ by $K(C)(E[N])$. The extension $K(C)\subset{K(C)(E[N])}$ is then finite Galois and we denote its Galois group by $G:=G_{K(C)}/G_{K(C)(E[N])}$.   We thus have an induced injective representation
\begin{equation}
\rho:G\rightarrow{\text{SL}_{2}(\mathbb{Z}/N\mathbb{Z})}.
\end{equation}

\begin{exa}
We refer the reader to \cite[Pages 41-100]{ModForms1} for the example below. Consider the elliptic curve $E$ over $K(j)$ defined by 
\begin{equation}
y^2+xy=x^3-\dfrac{36}{j-1728}x-{\dfrac{1}{j-1728}}.
\end{equation}

The $j$-invariant of this elliptic curve is equal to $j$. Furthermore, we have that the discriminant is given by
\begin{equation*}
\Delta=\dfrac{j^2}{(j-1728)^3}.
\end{equation*}

We thus see that for any $j_{0}\in{K}$ not equal to $0$ or $1728$, we obtain an elliptic curve over $K$ with $j$-invariant equal to $j_{0}$. 


One can then consider its $N$-torsion for any $N\in{\mathbb{N}}$. We choose a basis $\{P_{1},P_{2}\}$ and obtain the field extension $K(j)\subseteq{K(j)(E[N])}$, with corresponding representation
\begin{equation}
\rho:\mathrm{Gal}(K(j)(E[N])/K(j))\rightarrow{\mathrm{SL}_{2}(\mathbb{Z}/N\mathbb{Z})}.
\end{equation} 
We then consider the normal subgroup 
\begin{equation}
H:=\{\sigma\in\mathrm{Gal}(K(j)(E[N])/K(j)):\sigma(P)=\pm{P}\text{ for every }P\in{E[N]}\}.
\end{equation}
 We denote the fixed field of $H$ by $K(j)(E[N]/\pm)$. We then obtain a natural representation
 \begin{equation}
 \rho:\mathrm{Gal}(K(j)(E[N])/K(j))\rightarrow{\mathrm{SL}_{2}(\mathbb{Z}/N\mathbb{Z})/\{\pm{I}\}}.
 \end{equation} 

This induces a finite map of smooth curves $X(N)\rightarrow{\mathbb{P}^{1}_{K}}$. We call this curve $X(N)$ the \emph{full modular curve} of level $N$, as in \cite[Proposition 2, Page 48]{ModForms1}. Similarly, one also obtains $X_{0}(N)$ and $X_{1}(N)$ by considering the subgroups $H_{0}$ and $H_{1}$ of $\text{SL}_{2}(\mathbb{Z}/N\mathbb{Z})$ given by
\begin{align*}
H_{0}&=\left\{
\begin{pmatrix}
a & 0\\
b & d\\
\end{pmatrix}\in\text{SL}_{2}(\mathbb{Z}/N\mathbb{Z})
\right\},\\
H_{1}&=\left\{
\begin{pmatrix}
a & 0\\
b & \pm{}1\\
\end{pmatrix}\in\text{SL}_{2}(\mathbb{Z}/N\mathbb{Z})
\right\}.
\end{align*}
 of $\text{SL}_{2}(\mathbb{Z}/N\mathbb{Z})$. Note that $\pm{I}\in{H_{0},H_{1}}$ and $H_{1}\subset{H_{0}}$, so we obtain induced maps $X(N)\rightarrow{X_{1}(N)}\rightarrow{X_{0}(N)}\rightarrow{\mathbb{P}^{1}_{j}}$. The torsion extensions we consider in this paper can be seen as generalizations of these modular curves.  
\end{exa}

\begin{exa}
Let $E$ be the elliptic curve given by
\begin{equation}
y^2=x^3+Ax+B,
\end{equation}
where $A,B\in{K(C)}$. Let $N=2$. Then the field $K(C)(E[2])$ is none other than the splitting field of $f:=x^3+Ax+B$. The group $\text{SL}_{2}(\mathbb{Z}/2\mathbb{Z})$ has order $6$ and is isomorphic to $S_{3}$, so in this case we obtain an injection $G\rightarrow{S_{3}}$. For the tame case, the semistable covers arising from these extensions were studied in \cite{troptamethree}.
\end{exa}

\begin{exa}
Let $E$ be the elliptic curve given by 
\begin{equation}
y^2=x^3+Ax+B
\end{equation}
for some $A,B\in{K(C)}$ and let $N=3$. The field $K(C)(E[3])$ is then obtained as follows. One first considers the splitting field $L_{f}$ of the polynomial
\begin{equation}
f:=3x^4+6Ax^2+12Bx-A^2.
\end{equation}
This polynomial is also known as the third division polynomial for $E$. It contains the $x$-coordinates for the $3$-torsion. By attaching a square root of $\alpha^{3}+A\alpha+B$ for every root $\alpha$ 
 of $f$, one obtains $K(C)(E[3])$ as a degree $\leq{2}$ extension of $L_{f}$:
\begin{equation}
K(C)\subset{L_{f}}\subset{}K(C)(E[3]).
\end{equation}
The group $\text{SL}_{2}(\mathbb{F}_{3})$ has order $(3^2-3)(3^2-1)/2=24$ and is a solvable group. Let us illustrate this fact.  The group $H$ of matrices of order $4$ (of which there are $6$, which can easily be found using the conditions $\mathrm{tr}(A)=0$ and $\mathrm{det}(A)=1$) is a group of order $8$ and as such it is the unique (normal) Sylow-$2$ group. The quotient group $\text{SL}_{2}(\mathbb{F}_{3})/\{\pm{I}\}$ is then isomorphic to $A_{4}$ (it is the only nonabelian group of order $12$ with a normal Sylow-2 group). The group $A_{4}$ then contains the Klein-4 group as a normal subgroup with cyclic quotient of order three. 
\end{exa}

Let us note that the groups $\mathrm{SL}_{2}(\mathbb{Z}/N\mathbb{Z})$ are usually \emph{not} solvable. For instance, let $N=5$. Then $\text{SL}_{2}(\mathbb{F}_{5})/\{\pm{I}\}$ is isomorphic to $A_{5}$ (see \cite[Theorem 1.2.4]{Bonnaf2011}), which is not solvable. We have the following
\begin{lemma}
Let $S_{p}=\mathrm{PSL}_{2}(\mathbb{F}_{p}):=\mathrm{SL}_{2}(\mathbb{F}_{p})/\{\pm{I}\}$, $p\geq{3}$. Then $S_{p}$ is a simple group if $p\geq{5}$. Every proper subgroup of $S_{p}$ is solvable or isomorphic to the alternating group $A_{5}$: the last possibility occurs only if $p=\pm{1}\bmod{5}$. 
\end{lemma}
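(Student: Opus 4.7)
The plan is to treat the three assertions in turn: simplicity of $S_p$, the classification of proper subgroups, and the $A_5$ congruence condition.

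For the simplicity of $S_p$ when $p \geq 5$, I would lift the problem to $\mathrm{SL}_2(\mathbb{F}_p)$ and show that every normal subgroup $\tilde N$ properly containing $\{\pm I\}$ equals the whole group. The key structural input is that $\mathrm{SL}_2(\mathbb{F}_p)$ is generated by its two unipotent subgroups $U^{+}=\{T_a^{+}\}$ and $U^{-}=\{T_a^{-}\}$, where
\begin{equation}
T_a^{+} = \begin{pmatrix} 1 & a \\ 0 & 1 \end{pmatrix}, \qquad T_a^{-} = \begin{pmatrix} 1 & 0 \\ a & 1 \end{pmatrix},
\end{equation}
as follows from elementary row/column reduction. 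Since $a \mapsto T_a^{+}$ is a homomorphism, once $\tilde N$ contains one nontrivial upper transvection it contains all of $U^{+}$; by normality it then contains every conjugate, in particular $U^{-}=wU^{+}w^{-1}$ with $w=\begin{pmatrix}0 & -1 \\ 1 & 0\end{pmatrix}$, and hence all of $\mathrm{SL}_2(\mathbb{F}_p)$. The remaining task is to exhibit a single nontrivial transvection inside $\tilde N$, which I would do by picking a non-central $g \in \tilde N$ and computing the commutator $[g, T_1^{+}]$ explicitly; the hypothesis $p \geq 5$ enters to ensure the resulting entries do not all vanish.

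For the subgroup classification I would invoke Dickson's theorem on subgroups of $\mathrm{PSL}_2(\mathbb{F}_q)$: every subgroup is cyclic, dihedral, an affine group (conjugate into the image of a Borel), one of the exceptional groups $A_4$, $S_4$, $A_5$, or of the form $\mathrm{PSL}_2(\mathbb{F}_{p'})$ or $\mathrm{PGL}_2(\mathbb{F}_{p'})$ for a subfield $\mathbb{F}_{p'} \subseteq \mathbb{F}_p$. Since $p$ is prime, $\mathbb{F}_p$ admits no proper subfields, so the last class contributes only $S_p$ itself, and every proper subgroup falls into one of the other types. The cyclic, dihedral and affine subgroups are solvable by construction, as are $A_4$ and $S_4$; only $A_5$ fails to be solvable, giving the stated dichotomy.

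For the final clause I would use that every element of $S_p$ has order dividing one of $p$, $(p-1)/2$, $(p+1)/2$, corresponding to the unipotent, split-torus and non-split-torus conjugacy classes in $\mathrm{SL}_2(\mathbb{F}_p)$ after quotienting by $\pm I$. If $A_5 \hookrightarrow S_p$ is a proper embedding, then $p \neq 5$ (else $S_5 \cong A_5$ is not a proper subgroup), and $A_5$ contains an element of order $5$. Since $\gcd(5,2)=1$, this forces $5 \mid p-1$ or $5 \mid p+1$, i.e.\ $p \equiv \pm 1 \pmod 5$. The converse, that $A_5$ actually embeds whenever this congruence holds, is part of Dickson's theorem. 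The main obstacle I anticipate is precisely Dickson's classification: its proof requires a delicate case analysis organised around the intersection of a subgroup with a Sylow $p$-subgroup, and I would cite it rather than reproduce it here.
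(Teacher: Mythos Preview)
Your proposal is correct in substance. The paper itself does not give a proof at all: it simply cites Burnside's classical text and Serre's \emph{Abelian $\ell$-adic representations} (IV-23, Lemma~1), where exactly this statement is recorded. So there is no argument in the paper to compare against; you have supplied one where the paper outsources.

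Your route is the standard one and aligns with what those references contain. Dickson's classification of subgroups of $\mathrm{PSL}_2(\mathbb{F}_q)$ is precisely the content of the relevant chapter of Burnside, and your observation that for $q=p$ prime there are no proper subfield subgroups, leaving only solvable types and $A_5$, is exactly how Serre phrases it. The element-order argument for the congruence $p\equiv\pm 1\pmod 5$ is also standard and correct.

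One small caution on the simplicity sketch: the commutator $[g,T_1^{+}]$ is not in general a transvection for arbitrary non-central $g$ (e.g.\ when $g$ lies in a non-split torus the computation is less clean). The argument goes through smoothly if you first conjugate $g$ into one of its three canonical forms (split semisimple, non-split semisimple, unipotent) and treat each case, or alternatively invoke Iwasawa's criterion via the primitive action on $\mathbb{P}^1(\mathbb{F}_p)$. Either way the result is classical and your outline is sound; just be aware that the single commutator step as written needs a short case analysis to land on an actual transvection.
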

\begin{proof}
See \cite[Chapter XX]{Burns1} and \cite[IV-23, Lemma 1]{Ser2}.
\end{proof}

In this paper, we will be mostly interested in \emph{geometric} extensions of $K(C)$. Let $G_{K(C)}$ be the absolute Galois group. By restricting automorphisms of $\overline{K(C)}/K(C)$ to $\overline{K}$, we obtain the natural exact sequence 
\begin{equation}
1\rightarrow{}G_{\overline{K}(C)}\rightarrow{}G_{K(C)}\rightarrow{G_{\overline{K}/K}}\rightarrow{1},
\end{equation}
where $G_{\overline{K}(C)}$ is the absolute Galois group of $\overline{K}(C):=\overline{K}\otimes_{K}{K(C)}$ (see \cite[Chapter 3, Corollary 2.14]{liu2}).
\begin{mydef}\label{GeometricAutomorphisms}
Let $K(C)\subset{L}$ be any finite Galois extension. We say that $\sigma\in{G_{K(C)}}$ (or its image in $G_{K(C)}/G_{L}$) is {\bf{geometric}} on $L$ if $\sigma|_{\overline{K}\cap{L}}=\mathrm{id}$. If the representation $\rho:G\rightarrow{\mathrm{SL}_{2}(\mathbb{Z}/N\mathbb{Z}})$ induced by the $N$-torsion of an elliptic curve consists only of geometric automorphisms, we say that $\rho$ is geometric. 
\end{mydef}

By extending the base field $K$ to $K':=\overline{K}\cap{K(C)(E[N])}$, we can create a canonical geometric representation associated to $K(C)\subset{K(C)(E[N])}$. We will study these extensions in more detail in the next section.     

\subsection{Associating semistable coverings to torsion point extensions}\label{Semistablecovering}

The extension $K(C)\rightarrow{K(C)(E[N])}$ created in Section \ref{Torsionextensions} induces a finite Galois morphism of curves by taking the normalization $C_{N}$ of $C$ in $K(C)(E[N])$. This curve $C_{N}$ is not immediately geometrically irreducible however, as the following example will illustrate. 
\begin{exa}
Let $E$ be any elliptic curve defined over $K$. We then consider $E$ as an elliptic curve over $K(C)$ and consider the extensions $K(C)\subset{K(C)(E[N])}$. These extensions are just finite extensions of the ground field $K$ and as such the intersection of $K(C)(E[N])$ with $\overline{K}$ will be larger than $K$ as soon as the induced extensions $K\subset{K(E[N])}$ are nontrivial. By \cite[Chapter 3, Corollary 2.14]{liu2}, we find that the corresponding normalization of $C$ in $K(C)(E[n])$ is not geometrically irreducible in this case. 
\end{exa}

To obtain a bonafide extension of geometrically irreducible curves, we thus have to extend the ground field $K$. For any fixed $n\in\mathbb{N}$, we take the finite extension $K\subset{K':=K(C)(E[N])\cap{\overline{K}}}$ and consider the base change $C_{K'}:=C\times_{\text{Spec}(K)}{\text{Spec}(K')}$. Note that $C$ is geometrically irreducible, so $C_{K'}$ is again a geometrically irreducible, smooth and projective curve over $K'$. We write $C:=C_{K'}$, $K:=K'$ and take the normalization $C_{N}$ of $C$ in $K(C)(E[N])$. By \cite[Chapter 3, Corollary 2.14]{liu2}, we then have that $C_{N}$ is geometrically irreducible and we thus obtain a finite Galois covering
\begin{equation}
C_{N}\rightarrow{C}
\end{equation} 
with Galois group $G$. Note that this group might have changed after the extension $K\subset{K'}$. 

Let $\mathcal{C}$ be a strongly semistable model for $C$, as defined in Section \ref{DisBran3}. By applying the constructions in that section, we find a finite extension $K'\supset{K}$, a strongly semistable model $\mathcal{C}_{N}$ for $C_{N}$ over $R'$, an induced Galois covering of strongly semistable models
\begin{equation}
\mathcal{C}_{N}\rightarrow{\mathcal{C}}
\end{equation}
over $R'$
and an induced Galois covering of metrized complexes of $k$-curves
\begin{equation}
\Sigma(\mathcal{C}_{N})\rightarrow{\Sigma(\mathcal{C})}.
\end{equation}

In particular, we see that the matrices arising from the \emph{geometric} representation (as in Definition \ref{GeometricAutomorphisms})
\begin{equation}
\rho_{N}:G\rightarrow{\text{SL}_{2}(\mathbb{Z}/N\mathbb{Z})}
\end{equation}
act on the metrized complex $\Sigma(\mathcal{C}_{N})$ such that $\Sigma(\mathcal{C}_{N})/G=\Sigma(\mathcal{C})$. We will be interested in the images of the various decomposition and inertia groups of the edges and vertices of these complexes under this representation. 

\begin{mydef}\label{UnramifiedEdges}
Let $e'$ be an edge of $\Sigma(\mathcal{C}_{N})$ lying above $e\in\Sigma(\mathcal{C})$ for the morphism $\mathcal{C}_{N}\rightarrow{\mathcal{C}}$ constructed above. We say that $E[N]$ is {\emph{unramified}} above an edge $e\in{\Sigma(\mathcal{C})}$ if we have that $I_{e'/e}=(1)$. 
\end{mydef}
\begin{rem}
Note that this definition does not depend on the edge chosen, since any other edge $e''$ above $e$ gives a conjugated inertia group, which again must be trivial. 
\end{rem}

\section{Minimal Weierstrass models and strongly semistable models}

In this section, we will study the notion of minimal Weierstrass models over the function field $K(C)$ of a smooth, geometrically irreducible, projective curve $C$ with strongly semistable model $\mathcal{C}$. We will start with a strongly semistable regular model and then also define the corresponding notion for strongly semistable models. We will be mostly interested in defining the reduction type of an elliptic curve at subgraphs $T$ of $\Sigma(\mathcal{C})$, which we will do in Section \ref{ReductionType2}. We then show some basic properties regarding these reduction types. In Section \ref{NeronCriterion}, we show using Theorem \ref{SwitchTheorem} and the N\'{e}ron-Ogg-Shafarevich criterion (Theorem \ref{NOSCriterion}) that the torsion subgroup $E[N]$ of an elliptic curve with good reduction at an edge is unramified above that edge. 

\subsection{Minimal Weierstrass models}

We first quickly review the notion of a minimal Weierstrass model at a codimension one point of $\mathcal{C}$. Let $E/K(C)$ be an elliptic curve over the function field $K(C)$ of a smooth, geometrically irreducible, projective curve over $K$ with strongly semistable \emph{regular} model $\mathcal{C}$. Let $z\in\mathcal{C}$ be any point of codimension one, i.e. $\text{dim}(\mathcal{O}_{\mathcal{C},z})=1$. We let $v_{z}(\cdot{})$ be the corresponding discrete valuation on $K(C)^{*}$, $R_{z}$ the valuation ring, $\mathfrak{m}_{z}\subset{R_{z}}$ the maximal ideal and $k(z):=R_{z}/\mathfrak{m}_{z}$ the residue field.

Any elliptic curve $E/K(C)$ can be put in the following Weierstrass form
\begin{equation}
y^2+a_{1}xy+a_{3}y=x^3+a_{2}x^2+a_{4}x+a_{6},
\end{equation}
where $a_{i}\in{K(C)}$, see \cite[Chapter III, Proposition 3.1]{silv1}. We will denote the corresponding projective curve by $W_{K(C)}$. By scaling the coefficients, we then obtain a plethora of models over $R_{z}$. We now single out the models that minimize the valuation of the discriminant under the additional condition that $a_{i}\in{R_{z}}$. 
\begin{mydef}
Let $W_{K(C)}$ be a Weierstrass model for $E$ with coefficients $a_{i}\in{R_{z}}$. If the valuation of the discriminant $v_{z}(\Delta_{W})$ is minimal among all Weierstrass models over $K(C)$ with $a_{i}\in{R_{z}}$, then $W_{K(C)}$ is said to be a \emph{minimal Weierstrass model} over $R_{z}$. The valuation $v_{z}(\Delta_{W})$ is referred to as the valuation of the minimal discriminant at $z$.  
\end{mydef} 

\begin{rem}
Let us now relate this theory of minimal Weierstrass models to the scheme-theoretic minimal regular model. Given such a minimal Weierstrass model, we define the scheme-theoretic model $W/R_{z}$ by homogenizing the Weierstrass equation:
\begin{equation}
W:=\text{Proj}(R_{z}[X,Y,Z]/(Y^2Z+a_{1}XYZ+a_{3}YZ^2-X^3-a_{2}X^2Z-a_{4}XZ^2-a_{6}Z^3)).
\end{equation}
The minimal regular model $X/R_{z}$ of $E$ is then related to $W$ as follows. Consider the set of vertical prime divisors $\Gamma$ of $X$ that do not meet the closure of the identity $\{\mathcal{O}\}$ in $X$. There is then a contraction morphism of these components $X\rightarrow{W'}$ and we in fact have that $W=W'$, see \cite[Chapter 9, Theorem 4.35]{liu2}. Here we use that $\text{Pic}(\text{Spec}(R_{z}))=0$. 

The minimal regular model $X$ of $E/K(C)$ over $R_{z}$ is then also related to the N\'{e}ron model $\mathcal{E}/R_{z}$ of $E/K(C)$ by the following procedure: consider the reduced closed subscheme $S$ of $X$ consisting of the nonsmooth points in the closed fiber. The open subscheme $\mathcal{E}:=X\backslash{S}$ then gives the N\'{e}ron model of $E/K(C)$, see \cite[Chapter 10, Theorem 2.14]{liu2}.
\end{rem}

Let $S$ be any collection of valuations of the function field $K(C)$ arising from the model $\mathcal{C}$. That is, for every valuation $v(\cdot{})$ in $S$, there is a point $z$ in $\mathcal{C}$ of codimension one (i.e., $\text{dim}(\mathcal{O}_{\mathcal{C},z})=1$) whose associated valuation gives $v(\cdot{})$. We will write valuations in $S$ as $v_{z}(\cdot{})$, where $z$ is the corresponding point of codimension one in $\mathcal{C}$. For every valuation $v_{z}(\cdot{})$, we can find a minimal Weierstrass model $W_{z}$ with corresponding discriminant $\Delta_{z}$. The valuation $v_{z}(\Delta_{z})$ of the minimal discriminant is then independent of the minimal Weierstrass model chosen.
\begin{mydef}
Let $W/K(C)$ be a Weierstrass model for an elliptic curve $E$ with associated coefficients $\{a_{i}\}$. We say that $W$ is $S$-minimal if for every valuation $v_{z}\in{S}$, we have that $v_{z}(a_{i})\geq{0}$ and $v_{z}(\Delta_{W})=v_{z}(\Delta_{z})$, where $\Delta_{z}$ is the discriminant of any minimal Weierstrass model $W_{z}$ at $z$. 
\end{mydef}

We first note that these global Weierstrass models don't exist if $S$ is too large.
\begin{exa}
Let $C:=\mathbb{P}^{1}_{K}$ with function field $K(w)$ and strongly semistable model $\mathbb{P}^{1}_{R}$.  
Now consider the elliptic curve $E/K(w)$ defined by the Weierstrass equation
\begin{equation}
y^2=x^3+x^2+w.
\end{equation}
The discriminant of this Weierstrass model $W$ is $$\Delta_{W}=-432\cdot{}w^2 - 64\cdot{w}.$$ 

Let $S$ be the set of valuations arising from points in $C$. $W$ is then minimal at every point except infinity. Any transformation (as in \cite[Chapter III, Page 44]{silv1}) that makes $W$ minimal at infinity transforms the discriminant by
\begin{equation}
u^{12}\Delta'=\Delta.
\end{equation}
We then find that the valuation of $u$ at infinity is $-1$, meaning that the valuation of $u$ at some other point must be positive (since the degree of $u$ in $\mathbb{P}^{1}$ must be zero). For any such point, the corresponding Weierstrass model is not minimal, so no such model can exist. 
If instead we take $S$ to be all points minus infinity, then $W$ is $S$-minimal. 
\end{exa}

For the applications we have in mind, $S$ will be finite. 
In this case, global $S$-models always exist.
\begin{pro}\label{SMinimal}
Suppose that $S$ is finite. Then there exists an $S$-minimal Weierstrass model for $E$.
\end{pro}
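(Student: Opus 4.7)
The plan is to exploit the semi-local structure at $S$ and reduce to the well-known existence of a globally minimal Weierstrass equation over a principal ideal domain. The key step is to produce an ambient PID in which all of the prescribed local minimalizations can be carried out simultaneously.

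First, I would introduce the semi-local subring
$$R_{S} := \bigcap_{z \in S} R_{z} \;\subset\; K(C).$$
Since each $R_{z}$ is a discrete valuation ring of $K(C)$ and $S$ is finite, $R_{S}$ is a Dedekind domain whose maximal ideals are exactly the contractions $\mathfrak{m}_{z} \cap R_{S}$ for $z \in S$. Being semi-local, any fractional ideal of $R_{S}$ is determined by its valuations at these finitely many primes, and by the Chinese Remainder Theorem one can realize any prescribed pattern of valuations by a single element of $K(C)^{*}$. Hence $R_{S}$ is a PID.

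Next, I would apply the standard theorem guaranteeing that over a Dedekind domain of trivial Picard group (in particular a PID), every elliptic curve over the fraction field admits a globally minimal Weierstrass equation with coefficients in the base ring; see \cite[Chapter VIII]{silv1} and the analogous discussion for general class-number-one Dedekind domains. Concretely, start from an arbitrary Weierstrass equation $W_{0}$ for $E$ over $K(C)$. For each $z \in S$, there is an admissible change of variables $(u_{z}, r_{z}, s_{z}, t_{z}) \in (K(C)^{*}) \times K(C)^{3}$ carrying $W_{0}$ to a local minimal model $W_{z}$ over $R_{z}$ with discriminant $\Delta_{z}$. Using the PID property of $R_{S}$, one may choose a single element $u \in K(C)^{*}$ with $v_{z}(u) = v_{z}(u_{z})$ for all $z \in S$, and then use approximation at the finitely many primes of $R_{S}$ to select $r, s, t \in K(C)$ so that the global transformation $(u, r, s, t)$ agrees with $(u_{z}, r_{z}, s_{z}, t_{z})$ modulo a sufficiently high power of $\mathfrak{m}_{z}$ at each $z \in S$. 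The resulting equation $W$ then has coefficients $a_{i} \in R_{S}$, hence $v_{z}(a_{i}) \geq 0$ for every $z \in S$, and its localization at each $R_{z}$ differs from $W_{z}$ only by an admissible transformation over $R_{z}^{*}$, whence $v_{z}(\Delta_{W}) = v_{z}(\Delta_{z})$. Thus $W$ is $S$-minimal.

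The main technical point, and the only step that is not purely formal, is the simultaneous approximation of the transformation parameters $(u, r, s, t)$ at the finitely many places in $S$. This is exactly where the hypothesis that $S$ is finite is indispensable: it reduces the question over the infinite-dimensional Dedekind ring of functions on $\mathcal{C}$ to a question over the semi-local PID $R_{S}$, where the obstruction to the existence of a global minimal model (an element of a suitable quotient of the Picard group) vanishes identically. The example preceding the proposition, in which $S$ is the set of all points of $\mathbb{P}^{1}_{K}$, shows that the finiteness of $S$ is not merely a convenience but genuinely necessary.
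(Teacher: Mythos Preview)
Your proposal is correct and follows essentially the same route as the paper: both define the semi-local ring $R_{S}=\bigcap_{z\in S}R_{z}$, start from a Weierstrass equation, choose local minimalizing transformations $(u_{z},r_{z},s_{z},t_{z})$ at each $z\in S$, and then use approximation at the finitely many valuations to produce a single global $(u,r,s,t)$ matching the local data to sufficiently high order. The paper cites \cite[Chapter VIII, Proposition 8.2]{silv1} and the approximation theorem \cite[Chapter 9, Lemma 1.9.b]{liu2} directly, whereas you add the (correct and illuminating) conceptual gloss that $R_{S}$ is a semi-local PID so the Picard-group obstruction to a global minimal model vanishes; but the underlying mechanism is identical.
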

\begin{proof}
We will follow \cite[Chapter VIII, Proposition 8.2]{silv1} and use the approximation theorem for valuations. Consider the subring $R_{S}\subset{K(C)}$ defined by
\begin{equation}
R_{S}:=\{x\in{K(C)}:v_{z}(x)\geq{0}\text{ for every }v_{z}\in{S}\}.
\end{equation}
That is, it is the intersection of the valuation rings arising from $S$. We first take any Weierstrass model $W$ (not necessarily minimal) defined over $R_{S}$. For every $v_{z}\in{S}$, there then exist $u_{z}, r_{z},s_{z},t_{z}$ such that the transformation defined by
\begin{align}
x&=u_{z}^{2}x_{z}+r_{z}\\
y&=u_{z}^{3}y_{z}+s_{z}u_{z}^{2}x_{z}+t_{z}
\end{align}
gives a minimal model at $z$. We will denote the resulting $a_{i}$ by $a_{i,z}$. Using the approximation theorem for valuations (see \cite[Chapter 9, Lemma 1.9.b]{liu2}), we then first find a $u\in{K(C)}$ such that $v_{z}(u)=v_{z}(u_{z})$ for every $v_{z}\in{S}$. Another application of the approximation theorem then yields 
 $r,s,t\in{R_{S}}$ such that for every $v_{z}\in{S}$, we have 
\begin{equation}
v_{z}(r-r_{z}),v_{z}(s-s_{z}),v_{z}(t-t_{z})>\max_{i=1,2,3,4,6}v_{z}(u_{z}^{i}a_{i,z}).
\end{equation}

A routine check using the transformation formulas for the coefficients $a_{i}$ (see \cite[Chapter III, Page 45]{silv1}) for the transformation defined by 
\begin{align}
x&=u^2{x'}+r\\
y&=u_{}^{3}y'_{}+s_{}u_{}^{2}x'_{}+t_{}
\end{align}
then shows that $v_{z}(a_{i}')\geq{0}$. The discriminant is easily seen to be minimal at every valuation, yielding the desired result. 
\end{proof}

\subsection{$S$-models for intersection graphs}\label{ReductionType2}

In this section, we will study the reduction of an elliptic curve at the vertical divisors of a strongly semistable regular model. We will then be able to study the reduction type of the elliptic curve on subgraphs of the intersection graph $\Sigma(\mathcal{C})$. The notions of good and multiplicative reduction will then be shown to be stable under finite extensions of $K$. 

Let $\mathcal{C}$ be a strongly semistable regular model. Every irreducible component $\Gamma$ in the special fiber of $\mathcal{C}$ gives rise to a valuation which we will denote by $v_{\Gamma}$. Let $S:=\{v_{\Gamma}\}$ be the finite set of all valuations arising from irreducible components $\Gamma$ in the special fiber of $\mathcal{C}$. By Proposition \ref{SMinimal}, we can find a Weierstrass model $W/K(C)$ for $E/K(C)$ that is minimal at every component $\Gamma$. Using this global model, we will now define the reduction type of $E$ on a subgraph of $\Sigma(\mathcal{C})$. We will do this in terms of the normalized Laplacians of the coefficient $c_{4}$ and the discriminant $\Delta$. For more details on the normalized Laplacian, see Definition \ref{NormalizedLaplacian} and Section \ref{NormalizedLaplacianSection}.  For the definition of the inequalities, see Definition 
\ref{InequalitiesDefinition}. 
\newpage
\begin{mydef}
Let $W/K(C)$ be a minimal $S$-model for the intersection graph $\Sigma(\mathcal{C})$ and let $T$ be a subgraph of $\Sigma(\mathcal{C})$.
\begin{enumerate}\label{ReductionType}
\item We say that $E$ has good reduction on $T$ if $\phi_{\Delta}=0$ on $T$.
\item We say that $E$ has multiplicative reduction on $T$ if $\phi_{\Delta}>0$ and $\phi_{c_{4}}=0$ on $T$.
\item We say that $E$ has additive reduction on $T$ if $\phi_{\Delta}>0$ and $\phi_{c_{4}}>0$ on $T$.
\end{enumerate} 
\end{mydef}
\begin{lemma}
The above definition for the reduction type of $E$ on a subgraph $T\subset{\Sigma(\mathcal{C})}$ does not depend on the choice of a minimal $S$-model $W/K(C)$. 
\end{lemma}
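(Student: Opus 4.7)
The plan is to compare two $S$-minimal Weierstrass models via the standard change-of-variables formulas and show that the scaling factor has trivial normalized Laplacian, so that the reduction-type conditions are unaffected.

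First, I would recall that any two Weierstrass models $W, W'$ for the same elliptic curve $E/K(C)$ are related by a transformation
\begin{equation*}
x = u^{2} x' + r, \qquad y = u^{3} y' + s u^{2} x' + t,
\end{equation*}
with $u \in K(C)^{*}$ and $r,s,t \in K(C)$, under which the standard quantities transform as $\Delta = u^{12} \Delta'$ and $c_{4} = u^{4} c_{4}'$ (\cite[Chapter III, Page 45]{silv1}). Fix a vertex $v_{\Gamma} \in V(\Sigma(\mathcal{C}))$ with associated component $\Gamma$. Both $W$ and $W'$ are $S$-minimal, so by definition $v_{\Gamma}(\Delta) = v_{\Gamma}(\Delta_{\Gamma}) = v_{\Gamma}(\Delta')$. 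The transformation formula then gives $12 v_{\Gamma}(u) = 0$, so $v_{\Gamma}(u) = 0$ for every vertex of $\Sigma(\mathcal{C})$.

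By Lemma \ref{NormalizedLaplacianLemma} and Definition \ref{NormalizedLaplacian}, this means that the normalized Laplacian $\phi_{u}$ satisfies $\phi_{u}(v) = 0$ for every $v \in V(\Sigma(\mathcal{C}))$. Extending to the metrized graph via Equation \ref{ExtendedEquations}, the extended normalized Laplacian $\overline{\phi}_{u}$ is obtained by linearly interpolating the vertex values on each edge, so $\overline{\phi}_{u} \equiv 0$ on all of $\overline{\Sigma}(\mathcal{C})$. Since the normalized Laplacian is additive with respect to products (this is immediate from its defining property $v_{\mathfrak{p}}(fg) = v_{\mathfrak{p}}(f) + v_{\mathfrak{p}}(g)$ and the unicity in Lemma \ref{NormalizedLaplacianLemma}), the identities $\Delta = u^{12} \Delta'$ and $c_{4} = u^{4} c_{4}'$ give
\begin{equation*}
\phi_{\Delta} = 12\, \phi_{u} + \phi_{\Delta'}, \qquad \phi_{c_{4}} = 4\, \phi_{u} + \phi_{c_{4}'},
\end{equation*}
and hence $\overline{\phi}_{\Delta} = \overline{\phi}_{\Delta'}$ and $\overline{\phi}_{c_{4}} = \overline{\phi}_{c_{4}'}$ identically on $\overline{\Sigma}(\mathcal{C})$.

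Since the inequalities in Definition \ref{InequalitiesDefinition} are phrased purely in terms of the extended normalized Laplacian on the image $\overline{T}$ of the subgraph $T$ in $\overline{\Sigma}(\mathcal{C})$, equality of the extended Laplacians forces the conditions $\phi_{\Delta,T} = 0$, $\phi_{\Delta,T} > 0$, $\phi_{c_{4},T} = 0$ and $\phi_{c_{4},T} > 0$ to hold for $W$ if and only if they hold for $W'$. This establishes that all three reduction-type conditions are independent of the chosen $S$-minimal model. The only point that requires any care is the verification that the normalized Laplacian is additive under multiplication and compatible with the linear extension to $\overline{\Sigma}(\mathcal{C})$; both follow directly from the uniqueness statement in Lemma \ref{NormalizedLaplacianLemma} and from Lemma \ref{ScalingLemma} together with the interpolation formula in Equation \ref{ExtendedEquations}.
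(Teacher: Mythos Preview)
Your proof is correct and follows essentially the same approach as the paper: both arguments conclude that the valuations of $\Delta$ and $c_{4}$ at every component $\Gamma$ agree for any two $S$-minimal models, hence the normalized Laplacians coincide. Your version is in fact more detailed, since you explicitly derive $v_{\Gamma}(u)=0$ from minimality and the relation $\Delta=u^{12}\Delta'$ and then use $c_{4}=u^{4}c_{4}'$ to deduce $v_{\Gamma}(c_{4})=v_{\Gamma}(c_{4}')$, whereas the paper simply asserts both equalities.
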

\begin{proof}
For any other minimal $S$ model $W'$, we have
\begin{align}
v_{\Gamma}(c'_{4})=v_{\Gamma}(c_{4}),\\
v_{\Gamma}(\Delta')=v_{\Gamma}(\Delta)
\end{align}
for every irreducible component $\Gamma\subset{\mathcal{C}_{s}}$. 
This implies that the normalized Laplacians are the same, so the Lemma follows.
\end{proof}

Let $\mathcal{C}$ be a strongly semistable (not necessarily regular) model for $C$ and let $T$ be any subgraph of $\Sigma(\mathcal{C})$. The minimal desingularization $\mathcal{C}'$ of $\mathcal{C}$ then yields a regular model with a morphism $\mathcal{C}'\rightarrow{\mathcal{C}}$. The graph $T$ then naturally induces a natural subgraph $T'$ of $\Sigma(\mathcal{C}')$ by subdividing every edge of length $k$ into $k$ edges (with vertices in between) of length $1$. We use this to define reduction types on subgraphs of strongly semistable models. 

\begin{mydef}
Let $\mathcal{C}$ be a strongly semistable (not necessarily regular) model for $C$. We say that $E$ has good, multiplicative or additive reduction on a subgraph $T\subset{\Sigma(\mathcal{C})}$ if the desingularization $\mathcal{C}'$ has the corresponding reduction type on the induced subgraph $T'\subset{\Sigma(\mathcal{C})}$.  
\end{mydef}

\begin{lemma}\label{StableDefinition}
The notions of good and multiplicative reduction on subgraphs $T$ of $\Sigma(\mathcal{C})$ are stable under finite extensions $K\subset{K'}$. 
\end{lemma}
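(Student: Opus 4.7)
The plan is to start with an $S$-minimal Weierstrass model $W/K(C)$ furnished by Proposition~\ref{SMinimal} (applicable since $\mathcal{C}$ has finitely many components) and show that, when viewed over $K'(C)$, it remains minimal at each vertex of the subgraph $T'\subset\Sigma(\mathcal{C}')$ obtained by subdividing $T$. The defining inequalities on $T'$ then follow from the linear interpolation behaviour of the extended normalized Laplacians, as described in Lemma~\ref{ScalingLemma} and Equation~\ref{ExtendedEquations}.

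First I would pass to minimal desingularizations to reduce to the case that both $\mathcal{C}$ and $\mathcal{C}'$ are strongly semistable regular models. Normalizing the extended valuation by $v_{K'}(\pi_{K'})=1/n$, Lemma~\ref{ScalingLemma} applied to $f\in\{\Delta,\,c_4,\,a_1,\,a_2,\,a_3,\,a_4,\,a_6\}\subset K(C)$ tells us that the normalized Laplacian $\phi_f$ computed on $\Sigma(\mathcal{C}')$ agrees with the one on $\Sigma(\mathcal{C})$ at every inherited vertex, and is obtained at the new intermediate vertices by linear interpolation between the values at the two endpoints of the subdivided edge. Since $T$ is a subgraph, both endpoints of every subdivided edge lie in $T$, so the sign behaviour of $\phi_\Delta$ and $\phi_{c_4}$ on $T$ propagates linearly to every new vertex of $T'$.

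The key step is then to check that $W$ is actually minimal at each new vertex $z\in T'\setminus T$. In the good-reduction case $\phi_\Delta\equiv 0$ on $T$ yields $v_z(\Delta_W)=0$ by interpolation, which is itself a certificate of good reduction and of minimality at $z$. In the multiplicative case $\phi_{c_4}\equiv 0$ on $T$ gives $v_z(c_{4,W})=0$, so the Kodaira-type minimality criterion $v_z(c_4)<4$ is automatic, while $\phi_\Delta>0$ on $T$ interpolates to $v_z(\Delta_W)>0$. One also needs $v_z(a_i)\geq 0$ for $W$ to be a valid Weierstrass model at $z$, which again follows by linearly interpolating the nonnegative values of $\phi_{a_i}$ at the endpoints of the edge.

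To finish, let $W'/K'(C)$ be any $S'$-minimal Weierstrass model, also provided by Proposition~\ref{SMinimal}. Since $W$ is minimal at every vertex of $T'$, the $S'$-minimality of $W'$ forces $v_z(\Delta_{W'})=v_z(\Delta_W)$ and $v_z(c_{4,W'})=v_z(c_{4,W})$ at every such vertex, and both extended Laplacians are determined on $\overline{T'}$ by these vertex values via Equation~\ref{ExtendedEquations}. The inequalities of Definition~\ref{ReductionType} therefore transport from $T$ to $T'$, giving the claimed stability. The main obstacle is verifying minimality of $W$ at the new subdivision vertices; once that is in hand, everything else is an application of Lemma~\ref{ScalingLemma} plus the independence of the Laplacians of $\Delta$ and $c_4$ from the chosen $S'$-minimal model.
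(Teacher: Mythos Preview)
Your proposal is correct and follows essentially the same approach as the paper: start from an $S$-minimal Weierstrass model $W$ over $K(C)$, use Lemma~\ref{ScalingLemma} and the linear interpolation in Equation~\ref{ExtendedEquations} to see that the conditions $\phi_{a_i}\geq 0$, $\phi_\Delta=0$ (respectively $\phi_{c_4}=0$) persist on the subdivided graph $T'$, and infer that $W$ remains minimal there, so the reduction type is unchanged. Your write-up is simply more explicit than the paper's about \emph{why} minimality survives at the new vertices (namely $v_z(\Delta)=0$ in the good case and $v_z(c_4)=0$ in the multiplicative case serve as minimality certificates), and your final comparison with an auxiliary $S'$-minimal model $W'$ is harmless but unnecessary, since once $W$ itself is shown to be minimal on $T'$ the definition applies directly.
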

\begin{proof}
Let $W/K(C)$ be a minimal model on $\Sigma(\mathcal{C})$, see Proposition \ref{SMinimal}. We claim that $W/K'(C)$ is again minimal. Indeed, the conditions $\phi_{T, a_{i}}\geq{0}$, $\phi_{T, \Delta}=0$ and $\phi_{T, c_{4}}=0$ are stable under finite extensions by Lemma \ref{ScalingLemma} and the definition of the extended normalized Laplacian.  
This then also yields the stability of good and multiplicative reduction, as desired.  
\end{proof}

\begin{lemma}\label{ComponentsReductionType}
Suppose that $E$ has good or multiplicative reduction on a subgraph $T$ of the intersection graph $\Sigma(\mathcal{C})$ a strongly semistable model $\mathcal{C}$. Then for every subdivision $T'$ of $T$ over an extension of $K$, $E$ has the same corresponding reduction type at every vertex $v\in{T'}$.  
\end{lemma}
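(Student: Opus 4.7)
The plan is to reduce everything to the behavior of the extended normalized Laplacians $\overline{\phi}_{\Delta}$ and $\overline{\phi}_{c_{4}}$ on $\overline{T}$. First I would fix an $S$-minimal Weierstrass model $W/K(C)$ on the intersection graph $\Sigma(\mathcal{C})$ (replacing $\mathcal{C}$ by its minimal desingularization if needed), which exists by Proposition \ref{SMinimal}. By Lemma \ref{StableDefinition}, $W$ remains $S$-minimal after any finite extension $K\subset{K'}$ and any desingularization of $\mathcal{C}\times_{R}R'$, so the same Weierstrass coefficients $\Delta$ and $c_{4}$ can be used to compute the reduction type on the subdivided graph $T'$.

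Next I would invoke Lemma \ref{ScalingLemma} and Equation \ref{ExtendedEquations}: for any $f\in K(C)$, the values of the normalized Laplacian $\phi_{f}'$ of $f$ on the new vertices of a subdivision are obtained by linearly interpolating $\phi_{f}$ along each edge of $T$. Equivalently, if $v\in T'$ corresponds to a point $x\in\overline{T}$ under the natural embedding of subdivisions into the metrized graph $\overline{\Sigma}(\mathcal{C})$, then $\phi_{\Delta}'(v)=\overline{\phi}_{\Delta}(x)$ and $\phi_{c_{4}}'(v)=\overline{\phi}_{c_{4}}(x)$.

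Now I would finish by checking the two cases. If $E$ has good reduction on $T$, then by Definition \ref{ReductionType} combined with Definition \ref{InequalitiesDefinition} we have $\overline{\phi}_{\Delta}(x)=0$ for every $x\in\overline{T}$. Consequently $\phi_{\Delta}'(v)=0$ for every $v\in T'$, which is exactly the condition for $E$ to have good reduction at the single-vertex subgraph $\{v\}$. If instead $E$ has multiplicative reduction on $T$, then $\overline{\phi}_{\Delta}(x)>0$ and $\overline{\phi}_{c_{4}}(x)=0$ for every $x\in\overline{T}$, so $\phi_{\Delta}'(v)>0$ and $\phi_{c_{4}}'(v)=0$ at every $v\in T'$, which is the condition for multiplicative reduction at $v$.

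The only thing to be careful about is that these pointwise statements at $v\in T'$ really coincide with the subgraph-type definition applied to the one-vertex subgraph $\{v\}$; this is immediate from Definition \ref{InequalitiesDefinition}, since the extended normalized Laplacian at a single vertex is just its value there. The main (and essentially only) obstacle is ensuring that $W$ remains an $S$-minimal Weierstrass model for the enlarged set of vertical valuations produced by desingularization over $K'$, and this is precisely what the proof of Lemma \ref{StableDefinition} provides.
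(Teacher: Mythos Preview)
Your argument is correct and follows the same approach as the paper, only spelled out in more detail. The paper's proof is two sentences: it observes that the value of the normalized Laplacian at a vertex is the valuation of the function at the corresponding component, and then cites \cite[Chapter VII, Proposition 5.1]{silv1} to identify the conditions $v_{\mathfrak{p}}(\Delta)=0$ (resp.\ $v_{\mathfrak{p}}(\Delta)>0$, $v_{\mathfrak{p}}(c_{4})=0$) with classical good (resp.\ multiplicative) reduction. You expand exactly the points the paper leaves implicit, namely that the $S$-minimal model stays minimal at the new vertical primes after base change and desingularization (via Lemma~\ref{StableDefinition}) and that the Laplacian values at the new vertices are read off from $\overline{\phi}_{\Delta}$ and $\overline{\phi}_{c_{4}}$ by linear interpolation (via Lemma~\ref{ScalingLemma}).

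One small remark: you phrase the conclusion as ``good/multiplicative reduction on the single-vertex subgraph $\{v\}$,'' i.e.\ in the sense of Definition~\ref{ReductionType}. The lemma is later applied (e.g.\ in Theorem~\ref{MainTheoremGoodReduction}) to invoke the N\'{e}ron--Ogg--Shafarevich criterion, so what is actually needed is the \emph{classical} reduction type at the prime $\mathfrak{p}_{v}$. These coincide once $W$ is minimal at $\mathfrak{p}_{v}$, and that identification is precisely Silverman's Proposition~VII.5.1; it would be worth making that citation explicit, as the paper does.
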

\begin{proof}
By definition, we have that the value of the normalized Laplacians is equal to the valuation of the function at the component. By \cite[Chapter VII, Proposition 5.1]{silv1}, we see that the conditions on the Laplacian for a vertex correspond exactly with the ordinary notions of good and multiplicative reduction. This immediately yields the Lemma. 
\end{proof}
\begin{lemma}
Suppose that $E$ has good reduction on a subgraph $T$ of $\Sigma(\mathcal{C})$. Then $E$ also has good reduction on the corresponding complete subgraph $\tilde{T}$. 
\end{lemma}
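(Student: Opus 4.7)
The plan is to reduce the statement to a purely combinatorial property of the extended normalized Laplacian $\overline{\phi}_{\Delta}$ of the discriminant, and to exploit the fact that along each edge this extended Laplacian is a linear function of the edge parameter (Lemma \ref{ScalingLemma} and the definition of $\overline{\phi}_{f}$ given just after it).

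First I would fix, via Proposition \ref{SMinimal}, an $S$-minimal Weierstrass model $W/K(C)$ for $E$ where $S$ is the set of valuations associated to components of (the desingularization of) $\mathcal{C}$. By the very definition of $S$-minimality, the coefficients $a_{i}$ lie in $R_{\Gamma}$ for every vertical component $\Gamma$, so $v_{\Gamma}(\Delta)\geq 0$ for all such $\Gamma$. Translating this into the language of Definition \ref{NormalizedLaplacian}, we obtain the pointwise inequality $\phi_{\Delta}(v)\geq 0$ for every vertex $v$ of $\Sigma(\mathcal{C})$ (or of any subdivision thereof). Combined with the piecewise-linear interpolation formula \eqref{ExtendedEquations}, this shows
\begin{equation}
\overline{\phi}_{\Delta}(x)=\bigl(\phi_{\Delta}(v')-\phi_{\Delta}(v)\bigr)\cdot x+\phi_{\Delta}(v)\qquad\text{for }x\in[0,1]\cong e=vv',
\end{equation}
so $\overline{\phi}_{\Delta}$ is a nonnegative function on $\overline{\Sigma}(\mathcal{C})$ which is linear along each edge.

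Next I would unravel the hypothesis. Good reduction on $T$ means $\phi_{\Delta,T}=0$ in the sense of Definition \ref{InequalitiesDefinition}, i.e.\ $\overline{\phi}_{\Delta}\equiv 0$ on the image $\overline{T}\subset\overline{\Sigma}(\mathcal{C})$. In particular $\phi_{\Delta}(v)=0$ at every vertex $v\in T$. The complete subgraph $\tilde{T}$ has the same vertex set as $T$ but with every edge of $\Sigma(\mathcal{C})$ between two vertices of $T$ included. For any such edge $e=vv'$ in $\tilde{T}$, both endpoints satisfy $\phi_{\Delta}(v)=\phi_{\Delta}(v')=0$, so the linear interpolation above gives $\overline{\phi}_{\Delta}\equiv 0$ on $e$. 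Thus $\overline{\phi}_{\Delta}\equiv 0$ on all of $\overline{\tilde{T}}$, which is precisely the statement that $E$ has good reduction on $\tilde{T}$.

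Finally, I would address the mild technical point that Definition \ref{ReductionType} is formulated for strongly semistable \emph{regular} models. For a general $\mathcal{C}$ one passes to the minimal desingularization $\mathcal{C}'\rightarrow \mathcal{C}$ and to the induced subdivisions $T',\tilde{T}'$ of $T,\tilde{T}$; by Lemma \ref{ScalingLemma} the values of the extended normalized Laplacian at the new intermediate vertices are obtained by linear interpolation between the old ones, so $\overline{\phi}_{\Delta}\equiv 0$ on $\overline{T}$ automatically propagates to $\overline{\tilde{T}'}$, and the argument goes through verbatim. The only ``hard'' step is really the bookkeeping of what $\tilde{T}$ means relative to subdivisions; the mathematical content reduces entirely to linear interpolation between two zeros being zero, together with the nonnegativity $\phi_{\Delta}\geq 0$ provided by the minimality of the Weierstrass model.
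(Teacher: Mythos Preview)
Your argument is mathematically sound, but you have misread what the paper means by ``the corresponding complete subgraph $\tilde{T}$.'' You interpret $\tilde{T}$ as the induced subgraph on $V(T)$, i.e.\ you keep the vertex set fixed and add all edges of $\Sigma(\mathcal{C})$ joining two vertices already in $T$. The paper means something different: here a subgraph is allowed to contain an edge without containing both of its endpoints (cf.\ the example immediately after the lemma, where $T_{2}=\{e,\Gamma_{2}\}$ omits the other endpoint $\Gamma_{1}$), and $\tilde{T}$ is obtained by \emph{adding the missing endpoints}. This is why the paper phrases the conclusion as ``good reduction is a closed condition'': $\tilde{T}$ is the closure of $T$ in the metrized graph, not the edge-completion on a fixed vertex set.

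Consequently you have proved a different (and also true) statement: if $\phi_{\Delta}$ vanishes at both endpoints of an edge, linearity forces it to vanish on the whole edge. The paper's lemma goes the other way: if $\overline{\phi}_{\Delta}$ vanishes on the interior of an edge $e\in T$, then by continuity (equivalently, linearity) it vanishes at each endpoint $\Gamma$ adjacent to $e$, hence $v_{\Gamma}(\Delta)=0$. Your setup already contains exactly this ingredient---the piecewise-linear interpolation formula you displayed---so the fix is trivial once you read $\tilde{T}$ correctly. The nonnegativity $\phi_{\Delta}\ge 0$ that you establish from $S$-minimality is not actually needed for either direction.
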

\begin{proof}
Let $W$ be any $S$-minimal model and suppose that $E$ has good reduction on $T$. For any vertex with component $\Gamma$ adjacent to an edge in $T$, we must have $v_{\Gamma}(\Delta)=0$, since the Laplacian is is zero on that edge. This gives the Lemma.  
\end{proof}

In other words, having good reduction is a "closed" condition. Having multiplicative reduction is an open condition however, as the following example will show. 

\begin{exa}
Let $C:=\mathbb{P}^{1}_{K}$ and let $\mathcal{C}:=\text{Proj}(R[Z,T,W]/(ZT-\pi\cdot{W}^{2})$ be a strongly semistable regular model for $C$ with affine open $R[x,t]/(zt-\pi)$. Consider the elliptic curve $E/K(C)$ defined by the Weierstrass equation
\begin{equation}
y^2=x^3+x^2+t.
\end{equation}
The discriminant of this model is then given by
\begin{equation}
\Delta_{W}=-432\cdot{}t^2 - 64\cdot{t}.
\end{equation}
Let $\mathfrak{p}_{1}:=(z)$ and $\mathfrak{p}_{2}=(t)$ be the generic points of the irreducible components $\Gamma_{1}$ and $\Gamma_{2}$ of $\mathcal{C}_{s}$. Then $v_{\mathfrak{p}_{1}}(\Delta_{W})=0$ and $v_{\mathfrak{p}_{2}}(\Delta_{W})=1$. We thus see that $W$ defines an $S$-minimal model and that $E$ has good reduction on $T_{1}:=\{\Gamma_{1}\}$ and multiplicative reduction on $T_{2}:=\{e,\Gamma_{2}\}$.  
\end{exa}

We now study the following problem. Let $E/K(C)$ be an elliptic curve and let $\mathcal{C}'\rightarrow{\mathcal{C}}$ be a disjointly branched morphism. Let $e'\in\Sigma(\mathcal{C}')$ be an edge mapping to $e\in\Sigma(\mathcal{C})$.  
\begin{lemma}\label{DisBranReduction}
Let $\mathcal{C}'\rightarrow{\mathcal{C}}$ be a disjointly branched morphism, as defined in Section \ref{DisBran3}, and suppose that $E/K(C)$ has good (resp. multiplicative) reduction on $e\in\Sigma(\mathcal{C})$. Then $E$ has good (resp. multiplicative) reduction on $e'$ for any $e'\in\Sigma(\mathcal{C}')$ mapping to $e$.
\end{lemma}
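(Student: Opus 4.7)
The plan is to start from a globally minimal Weierstrass model and show that both its minimality and the relevant inequalities on the normalized Laplacians transfer along the disjointly branched morphism, so the defining conditions of good/multiplicative reduction on $e$ translate directly into the same conditions on $e'$.

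First, applying Proposition \ref{SMinimal} to the set $S$ of valuations coming from the irreducible components of (a regular desingularization of) $\mathcal{C}$, choose an $S$-minimal Weierstrass model $W/K(C)$ for $E$ with coefficients $a_i$, discriminant $\Delta$ and invariant $c_4$. Since $K(C)\subset K(C')$, we may regard $W$ as a Weierstrass model over $K(C')$ as well. I would then verify that $W$ is still $S'$-minimal for the set $S'$ of valuations associated to the components of $\mathcal{C}'$. By the second condition in the definition of a disjointly branched morphism, for any generic point $y'$ of a component $\Gamma'\subset\mathcal{C}'_s$ mapping to the generic point $y$ of $\Gamma\subset\mathcal{C}_s$, the induced extension $\mathcal{O}_{\mathcal{C},y}\to\mathcal{O}_{\mathcal{C}',y'}$ is \'etale. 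In particular, the ramification index is $1$ and $v_{\Gamma'}(f)=v_{\Gamma}(f)$ for all $f\in K(C)$. This forces $v_{\Gamma'}(\Delta_W)$ to equal the valuation of the minimal discriminant at $\Gamma'$, because any admissible transformation over $K(C')$ lowering the discriminant valuation at $\Gamma'$ would, by \'etale descent applied to the transformation parameters $(u,r,s,t)$, give a $K(C)$-transformation lowering $v_{\Gamma}(\Delta_W)$, contradicting minimality of $W$ at $\Gamma$.

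Second, I would compare the normalized Laplacians. By Definition \ref{NormalizedLaplacian}, $\phi'_f(\Gamma')=v_{\Gamma'}(f)=v_{\Gamma}(f)=\phi_f(\Gamma)$ for every $f\in K(C)$ and every component $\Gamma'$ of $\mathcal{C}'$ mapping to $\Gamma$. Hence the values of $\phi_\Delta$ and $\phi_{c_4}$ at the two vertices of $e'$ coincide with the values at the two vertices of $e$. For the interior of $e'$, the extended normalized Laplacian on $\overline{\Sigma}(\mathcal{C}')$ is given, on each edge, by linear interpolation of its values at the endpoints (Equation \ref{ExtendedEquations} and Lemma \ref{ScalingLemma}, whose argument applies verbatim here since subdividing $e'$ over a ramified extension still puts the support of the horizontal divisor only at old vertices). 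Consequently $\overline{\phi}'_f$ on $\overline{e'}$ equals $\overline{\phi}_f$ on $\overline{e}$ after the natural identification of the two intervals. Therefore $\phi_{\Delta,e}=0$ implies $\phi_{\Delta,e'}=0$, and $\phi_{\Delta,e}>0$ together with $\phi_{c_4,e}=0$ imply $\phi_{\Delta,e'}>0$ and $\phi_{c_4,e'}=0$, giving the required statements for good and multiplicative reduction via Definition \ref{ReductionType}.

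The main obstacle I expect is Step 1: carefully justifying that $S$-minimality is preserved under the \'etale extension of generic points of components, in particular that minimality at $\Gamma$ genuinely forces minimality at the lift $\Gamma'$ rather than only the inequality $v_{\Gamma'}(\Delta_W)\geq v_{\Gamma'}(\Delta_{\min})$. Once this is in place, Step 2 is a direct consequence of the definitions of the normalized Laplacian and its extension to the metrized graph, together with the inequality conventions of Definition \ref{InequalitiesDefinition}.
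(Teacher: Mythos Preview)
Your approach is essentially the paper's own: use vertical unramifiedness of the disjointly branched morphism to see that $v_{\mathfrak{p}'}|_{K(C)}=v_{\mathfrak{p}}$ for every generic point $\mathfrak{p}'$ of $\mathcal{C}'_s$ over $\mathfrak{p}$, conclude that the minimal model $W$ stays minimal and that the normalized Laplacians of $\Delta$ and $c_4$ are unchanged, and read off the reduction type on $e'$ from Definition~\ref{ReductionType}. The paper's proof is just a compressed version of exactly this.

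The one genuine weak spot is your justification of minimality in Step~1. The claim that an admissible transformation over $K(C')$ lowering $v_{\Gamma'}(\Delta_W)$ would, ``by \'etale descent applied to the transformation parameters $(u,r,s,t)$,'' descend to a $K(C)$-transformation is not valid: there is no descent for individual field elements along an \'etale extension of local rings, and $(u,r,s,t)\in K(C')$ have no reason to lie in $K(C)$. Fortunately you do not need this. In both cases covered by the lemma, the hypothesis already forces minimality over \emph{any} DVR where the $a_i$ are integral: if $E$ has good reduction on $e$ then $v_{\mathfrak{p}'}(\Delta_W)=v_{\mathfrak{p}}(\Delta_W)=0$, which is automatically minimal; if $E$ has multiplicative reduction on $e$ then $v_{\mathfrak{p}'}(c_4)=v_{\mathfrak{p}}(c_4)=0$, and a Weierstrass equation with $v(c_4)=0$ is always minimal (equivalently, $v(\Delta_W)=-v(j)$ is intrinsic). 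So replace the descent argument by this observation and the rest of your plan goes through unchanged; your ``main obstacle'' is in fact not an obstacle at all.
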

\begin{proof}
Let $W/K(C)$ be a minimal model for $\mathcal{C}$. Since $\psi:\mathcal{C}'\rightarrow{\mathcal{C}}$ is disjointly branched, it is vertically unramified. This implies that for any generic point $\mathfrak{p}'\in\mathcal{C}'_{s}$ mapping to $\mathfrak{p}\in\mathcal{C}$, we have that $v_{\mathfrak{p}'}|_{K(C)}=v_{\mathfrak{p}}$. In other words, the vertical valuations are unchanged. This implies that $W$ is minimal over $\mathcal{C}'_{s}$ and that the normalized Laplacians are invariant, which tells us that the reduction type doesn't change. This yields the Lemma.   
\end{proof}

\subsection{N\'{e}ron-Ogg-Shafarevich for edges}\label{NeronCriterion}

In this section, we will use the notion of good reduction developed in the previous section to show that the disjointly branched morphism arising from the $N$-torsion of an elliptic curve with good reduction on an edge of $\Sigma(\mathcal{C})$ is unramified above that edge. We will use Theorem \ref{SwitchTheorem} to prove this.

Let us first give a review of the original N\'{e}ron-Ogg-Shafarevich criterion for codimension one points. Let $\overline{K(C)}$ be the algebraic closure of $K(C)$ and fix a discrete valuation ring $R_{\mathfrak{p}}\subset{K(C)}$ with residue field $k(\mathfrak{p})$. We will consider several $R_{\mathfrak{p}}$-subalgebras in the algebraic closure.  Let $R_{\mathfrak{p}}^{sh}$ be a strict Henselization of $R_{\mathfrak{p}}$ and let $\overline{R}_{\overline{\mathfrak{p}}}$ be the localization of the integral closure of $R$ in $\overline{K(C)}$ at a maximal ideal $\overline{\mathfrak{p}}$ lying over the unique maximal ideal of $R_{\mathfrak{p}}^{sh}$. We let $K(C)^{sh}$ be the quotient field of the strict Henselization. The inertia group $I_{\overline{\mathfrak{p}}/\mathfrak{p}}$ of $\overline{\mathfrak{p}}$ over $\mathfrak{p}$ is then equal to the Galois group $\text{Gal}(\overline{K(C)}/K(C)^{sh})$. That is, $K(C)^{sh}$ is the maximal unramified extension under $\overline{\mathfrak{p}}$.

Let $A$ be an abelian variety over $K(C)$ and let $\mathcal{A}_{\mathfrak{p}}$ be its N\'{e}ron model over $R_{\mathfrak{p}}$, which exists by \cite[Section 1.3, Corollary 2]{Bosch1990}. $A$ is said to have good reduction at $\mathfrak{p}$ if the identity component $\mathcal{A}_{\mathfrak{p},0,s}$ is an abelian variety. The N\'{e}ron-Ogg-Shafarevich criterion can be stated as follows:
\begin{theorem}{\bf{(N\'{e}ron-Ogg-Shafarevich criterion)}}\label{NOSCriterion}
Let $A$ be an abelian variety, $\mathcal{A}_{\mathfrak{p}}$ the N\'{e}ron model for $A$ over $R_{\mathfrak{p}}$ and $\ell$ a prime different from $\text{char}(k(\mathfrak{p}))$.  Then the following are equivalent: 
\begin{enumerate}
\item $A$ has good reduction at $\mathfrak{p}$.
\item $\mathcal{A}_{\mathfrak{p}}$ is an abelian scheme.
\item For every $n\geq{0}$, the inertia group $I_{\overline{\mathfrak{p}}/\mathfrak{p}}$ acts trivially on $A[\ell^{n}](\overline{K(C)})$. In other words, the canonical map
\begin{equation}
A[\ell^{n}](K(C)^{sh})\rightarrow{A[\ell^{n}](\overline{K(C)})}
\end{equation}
is bijective.
\item The Tate module $T_{\ell}(A)$ is unramified over $R$. That is, the inertia group operates trivially on $T_{\ell}(A)$.
\end{enumerate}
\end{theorem}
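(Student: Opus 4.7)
The plan is to establish the cycle of implications $(1) \Leftrightarrow (2) \Rightarrow (3) \Leftrightarrow (4) \Rightarrow (2)$, from which all four equivalences follow. The steps $(1) \Leftrightarrow (2)$ and $(3) \Leftrightarrow (4)$ are essentially formal: the first is the definition combined with the N\'eron mapping property, since a proper smooth model over $R_\mathfrak{p}$ with a group structure extending that on the generic fibre must, by universality, agree with the N\'eron model $\mathcal{A}_\mathfrak{p}$, and conversely the identity component of an abelian scheme is by definition an abelian variety; the second follows immediately from $T_\ell(A) = \varprojlim_n A[\ell^n](\overline{K(C)})$ and the continuity of the inertia action.

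For $(2) \Rightarrow (3)$, I would argue that whenever $\mathcal{A}_\mathfrak{p}$ is an abelian scheme, the multiplication-by-$\ell^n$ morphism is a finite \'etale isogeny, since its degree $\ell^{2n\dim(A)}$ is coprime to $\mathrm{char}(k(\mathfrak{p}))$. Consequently, $\mathcal{A}_\mathfrak{p}[\ell^n]$ is a finite \'etale group scheme over $R_\mathfrak{p}$, and by the standard equivalence between finite \'etale algebras over a strictly henselian local ring and finite continuous $\pi_{1}$-sets, its $R_\mathfrak{p}^{sh}$-valued points coincide with its $\overline{R}_{\overline{\mathfrak{p}}}$-valued points. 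Composing with the bijection between $R_\mathfrak{p}^{sh}$-sections of $\mathcal{A}_\mathfrak{p}$ and $K(C)^{sh}$-points of $A$ from the N\'eron mapping property yields the bijection in $(3)$ and hence triviality of the inertia action.

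The main obstacle, and the core of the criterion, is the converse direction $(4) \Rightarrow (2)$. Here my plan is to invoke the Chevalley decomposition of the identity component of the special fibre $\mathcal{A}_{\mathfrak{p},0,s}^{0}$ as an extension
\begin{equation}
1 \to H \to \mathcal{A}_{\mathfrak{p},0,s}^{0} \to B \to 1,
\end{equation}
with $B$ an abelian variety and $H$ a connected affine group, and to decompose $H$ further as an extension of a torus $T$ by a unipotent group $U$. The goal is to show that the unramifiedness of $T_\ell(A)$ forces $H=0$. For the torus contribution, one uses that the $\ell^n$-torsion of $T$ produces a piece of $T_\ell(A)$ on which the tame quotient of inertia acts through the monodromy pairing, so a nontrivial torus forces a nontrivial inertia action. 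For the unipotent contribution, since $\ell \neq \mathrm{char}(k(\mathfrak{p}))$, the $\ell^n$-torsion of $U$ is trivial, producing a rank deficit in $T_\ell(A)$ incompatible with the full rank $2\dim(A)$ of inertia-invariants guaranteed by $(4)$.

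The final ingredient is the upgrade from $\mathcal{A}_{\mathfrak{p},0,s}^{0}$ being an abelian variety to $\mathcal{A}_\mathfrak{p}$ being a proper abelian scheme: a smooth separated group scheme over a Dedekind base whose identity component in the special fibre is already proper and of the same dimension as the generic fibre is itself proper, which follows from the valuative criterion combined with the N\'eron mapping property for the generic-fibre isomorphism. For the complete argument, including the monodromy pairing analysis used in the hard direction, I would refer the reader to the classical Serre--Tate paper and to \cite[Chapter 7.4]{Bosch1990}.
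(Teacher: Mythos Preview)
The paper does not actually prove this theorem: its entire proof is a citation to \cite[Chapter VII, Theorem 5]{Bosch1990} (and to Silverman for the elliptic-curve case). Your sketch is therefore considerably more detailed than what the paper supplies, and it follows the same architecture as the argument in Bosch--L\"utkebohmert--Raynaud, so in that sense you are aligned with the paper's cited source.

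Two small remarks on the sketch itself. First, in the hard direction $(4)\Rightarrow(2)$ the standard argument in \cite{Bosch1990} disposes of both the toric and the unipotent parts by a single rank count: if $\dim A=d$ and the Chevalley pieces have dimensions $u,t,a$ with $u+t+a=d$, then $\#\mathcal{A}^{0}_{\mathfrak{p},s}[\ell^{n}](\overline{k(\mathfrak{p})})=\ell^{n(t+2a)}$, while unramifiedness forces the reduction map on $A[\ell^{n}](K(C)^{sh})$ to inject $\ell^{2nd}$ points into the special fibre; letting $n\to\infty$ kills the bounded component group and yields $t+2a=2d$, hence $u=t=0$. Invoking the monodromy pairing for the toric contribution, as you do, is correct in spirit but heavier than necessary and sits close to the circle of results one is trying to establish. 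Second, you label $(1)\Leftrightarrow(2)$ ``essentially formal'' in the first paragraph and then (rightly) identify the non-obvious half---passing from an abelian identity component in the special fibre to properness of $\mathcal{A}_{\mathfrak{p}}$---as a separate ``final ingredient''. That is a harmless redundancy, not a gap.
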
 
\begin{proof}
See \cite[Chapter VII, Theorem 5]{Bosch1990} for the current version and \cite[Chapter VII, Theorem 7.1]{silv1} for the elliptic curve version over a perfect field. 
\end{proof}
\begin{rem}\label{ExtraRemark}
We will be needing the following version, which follows in exactly the same way from \cite[Chapter VII, Theorem 5]{Bosch1990}:
\begin{center}
 Let $N$ be a positive integer such that $\mathrm{char}(k(\mathfrak{p}))\nmid{N}$. Suppose that $A$ has good reduction at $\mathfrak{p}$. Then the canonical map $A[N](K(C)^{sh})\rightarrow{A[N](\overline{K(C)})}$ is bijective.
 \end{center}
\end{rem}
\begin{theorem}\label{MainTheoremGoodReduction} {\bf{(Good reduction for edges)}}
Let $E/K(C)$ be an elliptic curve over a curve $C$ with strongly semistable model $\mathcal{C}$ and intersection graph $\Sigma(\mathcal{C})$. Let $N$ be an integer with $\text{char}(k)\nmid{N}$ and suppose that $E$ has good reduction on an edge $e$ of $\Sigma(\mathcal{C})$. Then $E[N]$ is unramified over $e$,  in the sense of Definition \ref{UnramifiedEdges}. 
\end{theorem}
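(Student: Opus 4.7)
The plan is to use Theorem \ref{SwitchTheorem} to reduce the edge question to a codimension one question, and then invoke the N\'{e}ron--Ogg--Shafarevich criterion in the form of Remark \ref{ExtraRemark}. Let $e'\in\Sigma(\mathcal{C}_{N})$ be an edge over $e$. After passing to a suitable finite tame extension $K'/K$, subdivide $e$ to obtain the regularization $\mathcal{D}_{0}\rightarrow{\mathcal{C}}$, with new vertical components $\Gamma_{1},\ldots,\Gamma_{n-1}$ in $\mathcal{D}_{0}$. Taking the normalization $\mathcal{C}_{0}$ of $\mathcal{D}_{0}$ in $K(C)(E[N])$, the unique chain $x_{0},x_{1},\ldots,x_{n}$ of Lemma \ref{ChainLemma} lies above $e'$. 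Applying Theorem \ref{SwitchTheorem} with $i=1$ (which is automatically coprime to $|I_{e'}|$) yields $I_{x_{1}}=I_{e'}$, so it is enough to prove $I_{x_{1}}=(1)$.

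For this, let $y_{1}$ denote the generic point of $\Gamma_{1}$, a codimension one point of $\mathcal{D}_{0}$ corresponding to a discrete valuation $v_{y_{1}}$ on $K(C)$. Since $E$ has good reduction on the subgraph $T=\{e\}\subset\Sigma(\mathcal{C})$, Lemma \ref{ComponentsReductionType} implies that $E$ has good reduction at every vertex of the subdivision $T'$, and in particular at $y_{1}$. Stability of good reduction under the finite extension $K\subset{K'}$ is guaranteed by Lemma \ref{StableDefinition}. This good reduction at the codimension one point $y_{1}$ is the classical notion in the sense of \cite[Chapter VII]{silv1}, so we are now in a position to apply the N\'{e}ron--Ogg--Shafarevich criterion directly.

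The inertia group $I_{x_{1}}$ of Theorem \ref{SwitchTheorem} is precisely the inertia of the Galois extension $K(C)\subset{K(C)(E[N])}$ at the prime $x_{1}$ lying above $y_{1}$, since $\mathcal{D}_{0}\rightarrow{\mathcal{C}}$ is birational and hence $K(\mathcal{D}_{0})=K(C)$ and $K(\mathcal{C}_{0})=K(C)(E[N])$. Remark \ref{ExtraRemark} applied at $y_{1}$ (using $\mathrm{char}(k)\nmid{N}$ and good reduction) implies that $E[N](K(C)^{sh}_{y_{1}})\rightarrow{E[N](\overline{K(C)})}$ is bijective; equivalently, the inertia subgroup of $K(C)\subset{K(C)(E[N])}$ above $y_{1}$ acts trivially on $E[N]$. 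Since the representation $\rho:G\rightarrow{\mathrm{SL}_{2}(\mathbb{Z}/N\mathbb{Z})}$ is faithful, that inertia subgroup must be trivial, i.e.\ $I_{x_{1}}=(1)$. Combining with the previous step, $I_{e'/e}=(1)$, which is exactly the content of $E[N]$ being unramified above $e$.

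The main technical obstacle is verifying that the various base changes, subdivisions and normalizations are compatible, so that the inertia group produced by Theorem \ref{SwitchTheorem} in the covering $\mathcal{C}_{0}\rightarrow\mathcal{D}_{0}$ is really the same inertia group on which N\'{e}ron--Ogg--Shafarevich operates; once this bridge from edges to codimension one primes is in place, the remainder of the proof is a direct application of the classical criterion.
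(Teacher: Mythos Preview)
Your proof is correct and follows essentially the same route as the paper: reduce the edge inertia to a codimension one inertia via Theorem~\ref{SwitchTheorem}, use Lemma~\ref{ComponentsReductionType} to see that $E$ has good reduction at the new component, and then apply the N\'{e}ron--Ogg--Shafarevich criterion (Remark~\ref{ExtraRemark}) together with the faithfulness of $\rho$ to conclude. The only cosmetic difference is that the paper argues by contradiction---assuming $I_{e'/e}\neq(1)$ forces $l(e')<l(e)$ via \cite[Proposition 5.1.1]{tropabelian}, so $\mathcal{C}$ is automatically non-regular at $e$ and the desingularization produces a new component---whereas you force non-regularity directly by a preliminary tame base change $K\subset K'$; both devices serve the same purpose of guaranteeing a new vertex $\Gamma_{1}$ at which to run the classical criterion.
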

\begin{proof}
Suppose for a contradiction that $E[N]$ is ramified over $e=vw$, where $v$ corresponds to an irreducible component $\Gamma_{0}$ in $\mathcal{C}_{s}$. By \cite[Proposition 5.1.1]{tropabelian}, we find that the length of any edge $e'$ is smaller than that of $e$. This implies that $\mathcal{C}$ is not regular at $e$. We take a desingularization $\mathcal{C}_{0}$ above $e$ and consider the normalization $\mathcal{C}'_{0}$ of $\mathcal{C}_{0}$ in $K(C)(E[N])$. Let $v_{1}$ be a vertex corresponding to an irreducible component $\Gamma_{1}$ in $\mathcal{C}_{0}$ intersecting $\Gamma_{0}$ and let $\mathfrak{p}_{1}$ be its generic point. Using Lemma \ref{ComponentsReductionType}, we see that $E$ again has good reduction at $\mathfrak{p}_{1}$. By the N\'{e}ron-Ogg-Shafarevich criterion (Theorem \ref{NOSCriterion} and Remark \ref{ExtraRemark}), we then find that $I_{\mathfrak{p}'_{1}/{\mathfrak{p}_{1}}}=(1)$ for $\mathfrak{p}'_{1}$ lying over $\mathfrak{p}_{1}$. Switching from codimension one to codimension two using Theorem \ref{SwitchTheorem}, we then find that $I_{e'/e}=I_{\mathfrak{p}'_{1}/\mathfrak{p}_{1}}=(1)$, a contradiction. This finishes the proof.   
\end{proof}
\begin{cor}
Suppose that $E$ has good reduction at an edge $e\in\Sigma(\mathcal{C})$ and let $|G|$ be the order of the Galois group $G=\text{Gal}(K(C)(E[N])/K(C))$. There are then $|G|$  edges of length $l(e)$ in $\Sigma(\mathcal{C}_{N})$ lying above $e$.  
\end{cor}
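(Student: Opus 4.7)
The plan is to combine Theorem \ref{MainTheoremGoodReduction} with the standard orbit-stabilizer argument for the $G$-action on the fiber of $\Sigma(\mathcal{C}_N)\to\Sigma(\mathcal{C})$, together with the length-multiplication rule for Galois covers of metrized complexes that was recorded in Section \ref{DisBran3}.

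First, I would fix an edge $e'\in\Sigma(\mathcal{C}_N)$ lying above $e$. Since $\mathcal{C}_N\to\mathcal{C}$ is a Galois cover of (strongly) semistable models with group $G$, and $\Sigma(\mathcal{C}_N)/G=\Sigma(\mathcal{C})$, the group $G$ acts transitively on the set of edges of $\Sigma(\mathcal{C}_N)$ mapping to $e$. The stabilizer of $e'$ is by definition the decomposition group $D_{e'/e}$, so by orbit-stabilizer the number of edges above $e$ equals $|G|/|D_{e'/e}|$.

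Next, because the residue field $k$ is algebraically closed, the decomposition group of an edge coincides with its inertia group (this is the observation used at the end of Section~2 right after Theorem \ref{SwitchTheorem}). Thus $D_{e'/e}=I_{e'/e}$. By Theorem \ref{MainTheoremGoodReduction}, the hypothesis that $E$ has good reduction at $e$ together with $\operatorname{char}(k)\nmid N$ forces $I_{e'/e}=(1)$. Consequently $|D_{e'/e}|=1$ and there are exactly $|G|$ edges lying above $e$.

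Finally, for the length I would invoke the fact recalled in Section \ref{DisBran3} that in the induced Galois morphism of metrized complexes, the length of an edge in the cover equals the length of its image multiplied by the ramification index, i.e.\ by $|I_{e'/e}|$. Since this index is $1$, one has $l(e')=l(e)$ for every edge $e'$ above $e$, completing the proof. No step here is really an obstacle; the whole content has been pushed into Theorem \ref{MainTheoremGoodReduction}, and what remains is the elementary bookkeeping for a Galois action with trivial stabilizers together with the length-multiplication rule.
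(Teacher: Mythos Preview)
Your argument is correct and is exactly the intended one; the paper states this corollary without proof, and the orbit--stabilizer count together with $D_{e'/e}=I_{e'/e}=(1)$ from Theorem~\ref{MainTheoremGoodReduction} is precisely what is being invoked. One small caveat: the length rule from Section~\ref{DisBran3} actually reads $l(e)=l(e')\cdot|I_{e'/e}|$ (equivalently $l(e')=l(e)/|I_{e'/e}|$, as in the corollary after Theorem~\ref{MainTheorem2} and in the proof of Theorem~\ref{MainTheoremGoodReduction}), not $l(e')=l(e)\cdot|I_{e'/e}|$ as you wrote; since $|I_{e'/e}|=1$ here this does not affect your conclusion, but you should state the formula in the correct direction.
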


\section{Uniformization}\label{Uniformization}

In this section, we review the Tate uniformization for elliptic curves with split multiplicative reduction. Since our residue fields are not necessarily perfect, we cannot use \cite[Chapter V]{Silv2} directly. The version that we need can be found in \cite{Roquette1}.

Let $E$ be an elliptic curve over a local field $K$. Here, a local field is defined to be a field $K$ with a nontrivial real absolute value $a\mapsto{|a|}$ such that $K$ is complete with respect to the metric induced by this absolute value. We will furthermore assume that this absolute value is nonarchimedean. 

For any $q\in{K}$ with $0<|q|<1$, we consider the group $K^{*}/q^{\mathbb{Z}}$ and note that it can be given the structure of a rigid analytic space or a Berkovich space, see \cite[Chapter V, Section 1]{Fresnel2004} and \cite[Chapter 9, Section 2]{Bosch2014}. The elliptic curves $E/K$ that are isomorphic over $K$ to this object $K^{*}/q^{\mathbb{Z}}$ for some $q$ are then given by the following \emph{uniformization theorem}, which was first proved by Tate in \cite{Tate1971}. 

%
\begin{theorem}\label{UniformizationTheorem}
Let $E/K$ be an elliptic curve. Then $E$ is isomorphic to $K^{*}/q^{\mathbb{Z}}$ for some $q\in{K}$ if and only if the following hold:
\begin{enumerate}
\item The $j$-invariant satisfies $|j|>1$.
\item The Hasse invariant $\gamma(E)\in{K^{*}/(K^{*})^2}$ is trivial.
\end{enumerate}
Furthermore, we have that $|q|=\dfrac{1}{|j|}$.
\end{theorem}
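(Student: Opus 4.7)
The plan is to follow the classical Tate argument, using Roquette's treatment to deal with possibly imperfect residue fields. The proof splits naturally into two implications plus the numerical claim $|q|=1/|j|$, and I would handle the forward direction and the numerical formula simultaneously via the explicit series expansion of the Tate curve.

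First I would prove the forward direction. Assume $E \simeq K^{*}/q^{\mathbb{Z}}$ for some $q \in K$ with $0 < |q| < 1$. The Tate curve $E_{q}$ admits an explicit Weierstrass model whose coefficients are convergent integral power series in $q$, and a standard computation gives
\begin{equation}
j(E_{q}) = \frac{1}{q} + 744 + \sum_{n \geq 1} c_{n} q^{n}, \qquad c_{n} \in \mathbb{Z}.
\end{equation}
Since $|q|<1$, the dominant term is $1/q$, so $|j(E)|=|j(E_{q})|=1/|q|>1$, which simultaneously establishes condition (1) and the formula $|q|=1/|j|$. The explicit Weierstrass presentation of $E_{q}$ also shows that $E_{q}$ has trivial Hasse invariant in $K^{*}/(K^{*})^{2}$, yielding condition (2).

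For the reverse direction, suppose $|j|>1$ and $\gamma(E)$ is trivial. Inverting the series above gives a unique $q \in K^{*}$ with $|q|<1$ satisfying $j(E_{q})=j(E)$; explicitly, $q = 1/j + \ldots$ is obtained by a formal-power-series inversion whose convergence is guaranteed by the ultrametric estimate $|j|>1$. The resulting Tate curve $E_{q}$ satisfies $j(E_{q})=j(E)$, so $E$ and $E_{q}$ become isomorphic over $\overline{K}$. Over $K$, they differ by a quadratic twist, parametrized by a class in $K^{*}/(K^{*})^{2}$, and by construction this class is precisely $\gamma(E)$. The assumption $\gamma(E)=1$ therefore forces $E \simeq E_{q}$ over $K$.

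The main obstacle is making the quadratic twist step rigorous when the residue field is not perfect, because the standard identification of twists of elliptic curves with $j \neq 0,1728$ by $K^{*}/(K^{*})^{2}$ uses $\mathrm{char}(k) \neq 2$ and a Galois-cohomological computation that assumes a well-behaved separable closure. This is exactly the point where I would invoke Roquette's treatment in \cite{Roquette1}, which reformulates the Hasse invariant $\gamma(E)$ directly in terms of data intrinsic to the model so that the twist statement remains valid; once the correspondence $\gamma(E) \leftrightarrow \text{twist class}$ is in place, the implication is immediate. The series-inversion and the computation of $j(E_{q})$ in the forward direction are routine given the explicit formulas of Tate.
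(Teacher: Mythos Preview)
Your sketch is correct and is precisely the classical Tate argument that Roquette carries out; the paper does not give its own proof but simply cites \cite[VIIIa]{Roquette1} for the equivalence and \cite[VII]{Roquette1} for the formula $|q|=1/|j|$. One minor slip: the twist classification by $K^{*}/(K^{*})^{2}$ depends on $\mathrm{char}(K)\neq 2$, not on the residue characteristic, so the imperfect-residue-field issue is not about the cohomological twist step per se but rather about verifying that the explicit Tate model has the claimed reduction behaviour and Hasse invariant without appealing to results stated only over perfect residue fields---which is indeed what Roquette's treatment supplies.
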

\begin{proof}
See \cite[VIIIa]{Roquette1} for the first two claims and \cite[VII]{Roquette1} for the last claim. 
\end{proof}

Recall that for any elliptic curve $E$ with $j$-invariant not equal to $0$ or $1728$ and over a field $K$ of $\text{char}(K)\neq{2,3}$ given by a Weierstrass equation
\begin{equation}
y^2=x^3-27c_{4}x-54c_{6},
\end{equation}
the Hasse invariant of $E$ is given by the image in $K^{*}/(K^{*})^2$ of the element
\begin{equation}
\gamma'(E)=-\dfrac{c_{4}}{c_{6}}.
\end{equation}

We then also see that any elliptic curve with $|j|>1$ becomes isomorphic to $K^{*}/q^{\mathbb{Z}}$ after at most a quadratic extension, which is given by $K\subset{K(\sqrt{\gamma'(E)})}$.

One of the reasons that this uniformization is so valuable, is that the construction commutes with the action of the Galois group in the following sense. Suppose that we have a finite Galois extension of complete fields $K\subset{L}$ and consider the resulting uniformization $\psi_{L}:E(L)\rightarrow{L^{*}/q^{\mathbb{Z}}}$. For any $\sigma\in{\mathrm{Gal}(L/K)}$, we then have  
\begin{equation}
\sigma(\psi(P))=\psi(\sigma(P)).
\end{equation} 

We will see this property in action in the next section. 

\section{Multiplicative reduction and transvections for edges}

In this section, we will study elliptic curves $E/K(C)$ having multiplicative reduction at an edge $e\in\Sigma(\mathcal{C})$ for some strongly semistable model $\mathcal{C}$, as explained in Section \ref{ReductionType}. Using this, we will see that 
an inertia group $I_{e'/e}$ lying above an edge $e$ 
will contain an element $\sigma$ that maps to the matrix $  \begin{pmatrix}
1 & 1 \\
0&1 \end{pmatrix} $ under the Galois representation map $\rho_{\ell}:G\rightarrow{\text{SL}_{2}(\mathbb{F}_{\ell})}$. This will use the uniformization results introduced in the previous section. We will write $\phi_{j}$ for the normalized Laplacian of the $j$-invariant of $E$ and $\delta_{e}(\phi_{j})$ for the absolute value of the slope of $\phi_{j}$ on any edge $e\in\Sigma(\mathcal{C})$. We invite the reader to compare the upcoming theorem to \cite[Chapter V, Corollary 6.2]{Silv2} and \cite[IV-37, Proposition 1]{Ser2}.
Let us set
\begin{equation}
\tau:=\begin{pmatrix}
1 & 1 \\
0&1 \end{pmatrix}\in\mathrm{SL}_{2}(\mathbb{F}_{\ell}). 
\end{equation}
This $\tau$ will be referred to as a transvection, as in \cite[Chapter V]{Silv2} and \cite[Chapter IV]{Ser2}.
\begin{theorem}\label{MainTheorem2}

Let $E$ be an elliptic curve over $K(C)$ with multiplicative reduction at an edge $e\in\Sigma(\mathcal{C})$ and let $\mathcal{C}_{\ell}\rightarrow{\mathcal{C}}$ be the semistable covering associated to the Galois extension $K(C)\subset{K(C)(E[N])}$ with Galois group $G$ and representation $\rho_{\ell}:G\rightarrow{\mathrm{SL}_{2}(\mathbb{F}_{\ell})}$. Let $p=\text{char}(k)$  and let $\ell\geq{3}$ be a prime with $p\nmid{\#\rho_{\ell}(G)}$ and $\ell\nmid{\delta_{e}(\phi_{j})}$. Let $e'$ be an edge of $\Sigma(\mathcal{C}_{\ell})$ lying above $e$. There then exists a $\sigma\in{I_{e'/e}}$ such that
\begin{equation}
\rho_{\ell}(\sigma)\equiv{
  \begin{pmatrix}
1 & 1 \\
0&1 \end{pmatrix} 
}\mod{\ell}.
\end{equation}

\end{theorem}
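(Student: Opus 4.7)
The plan is to combine the Tate uniformization (Theorem \ref{UniformizationTheorem}) with the switching technique of Theorem \ref{SwitchTheorem} to reduce the transvection statement at the edge $e$ to a codimension-one Kummer-theoretic computation at a suitably chosen component of a desingularization of $e$.

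First I would, after a tame (at most quadratic) extension trivialising the Hasse invariant $\gamma'(E)$, work on a regular desingularization of the model along $e$, as in Theorem \ref{SwitchTheorem}. The desingularization replaces the length-$n$ edge $e$ by a chain of new components $\Gamma_1,\dots,\Gamma_{n-1}$ with generic points $\mathfrak{p}_1,\dots,\mathfrak{p}_{n-1}$. Since $E$ has multiplicative reduction on $e$ (so $\phi_{c_4}=0$ and $\phi_\Delta>0$ on $e$), the piecewise-linear interpolation of normalized Laplacians underlying Lemma \ref{ScalingLemma} gives
$$v_{\mathfrak{p}_i}(\Delta)=v_v(\Delta)+i\cdot\delta_e(\phi_\Delta),\qquad v_{\mathfrak{p}_i}(c_4)=0.$$
Because $\delta_e(\phi_\Delta)=\delta_e(\phi_j)$ is coprime to $\ell$ by hypothesis, as $i$ varies the residue $v_{\mathfrak{p}_i}(\Delta)\bmod\ell$ runs through every class; so one may pick some $i$ with both $\ell\nmid v_{\mathfrak{p}_i}(\Delta)$ and $\gcd(i,|I_{e'/e}|)=1$, refining the subdivision by a further tame base extension of degree coprime to $\ell\cdot|I_{e'/e}|$ if necessary. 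Theorem \ref{SwitchTheorem} then identifies $I_{\mathfrak{p}_i'/\mathfrak{p}_i}$ with $I_{e'/e}$.

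Next, at such a component set $\mathfrak{p}=\mathfrak{p}_i$ and let $L$ be the completion of $K(C)$ at $\mathfrak{p}$. We have $v_\mathfrak{p}(j)=3v_\mathfrak{p}(c_4)-v_\mathfrak{p}(\Delta)=-v_\mathfrak{p}(\Delta)<0$, so $|j|_\mathfrak{p}>1$, and since $\gamma'(E)$ has been trivialised, Theorem \ref{UniformizationTheorem} gives an analytic isomorphism $E(\overline L)\simeq\overline L^*/q^{\mathbb{Z}}$ with $v_\mathfrak{p}(q)=v_\mathfrak{p}(\Delta)$. Pick a basis $\{P_1,P_2\}$ of $E[\ell]$ with $P_1=\zeta_\ell$ (which lies in $K$ by assumption) and $P_2=q^{1/\ell}$. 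The Galois-equivariance of Tate uniformization forces every $\sigma\in I_{\mathfrak{p}'/\mathfrak{p}}$ to fix $P_1$ and to send $q^{1/\ell}\mapsto\zeta_\ell^{a(\sigma)}q^{1/\ell}$, so
$$\rho_\ell(\sigma)=\begin{pmatrix}1 & a(\sigma)\\ 0 & 1\end{pmatrix}.$$

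The assignment $\sigma\mapsto a(\sigma)$ is the Kummer character of the cyclic extension $L^{sh}\subset L^{sh}(q^{1/\ell})$; its image has order $\ell/\gcd(\ell,v_\mathfrak{p}(q))=\ell$ by our choice of $\mathfrak{p}$. Hence the character is surjective onto $\mathbb{F}_\ell$ and there exists $\sigma_0\in I_{\mathfrak{p}'/\mathfrak{p}}$ with $a(\sigma_0)=1$. Transporting back via the identification $I_{\mathfrak{p}'/\mathfrak{p}}=I_{e'/e}$ produces the required $\sigma\in I_{e'/e}$ with $\rho_\ell(\sigma)\equiv\begin{pmatrix}1 & 1\\ 0 & 1\end{pmatrix}\pmod\ell$. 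The main obstacle is the careful bookkeeping of the two auxiliary extensions — the quadratic one needed to trivialise $\gamma'(E)$ and the tame desingularization/base extension needed to reach a component with both $\ell\nmid v_{\mathfrak{p}_i}(q)$ and $\gcd(i,|I_{e'/e}|)=1$ — while verifying that neither spoils the $\ell$-adic assumptions $p\nmid\#\rho_\ell(G)$ and $\ell\nmid\delta_e(\phi_j)$; this is automatic since both auxiliary extensions have degree coprime to $\ell$ and are tame.
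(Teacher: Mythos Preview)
Your overall strategy is the same as the paper's: trivialise the Hasse invariant by a quadratic extension, pass to a desingularization of the edge and use Theorem~\ref{SwitchTheorem} to convert the edge inertia into a codimension-one inertia group, then invoke Tate uniformization and a Kummer computation to produce the transvection. Your handling of the compositum $L(E[\ell])$ and the restriction back to $K(C)(E[\ell])$ is also what the paper does.

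The one genuine difference is how you select the component $\mathfrak{p}_i$. You run a counting argument: since $\delta_e(\phi_j)$ is a unit mod $\ell$, the values $v_{\mathfrak{p}_i}(q)\bmod\ell$ sweep out all classes, so after possibly enlarging the subdivision you can find an $i$ with both $\ell\nmid v_{\mathfrak{p}_i}(q)$ and $\gcd(i,|I_{e'/e}|)=1$. The paper instead simply takes $i=1$ (so the second condition is automatic) and proves directly that $\ell\nmid v_{\mathfrak{p}_1}(q)$: since the semistable covering $\mathcal{D}\to\mathcal{D}^{H_1}$ is disjointly branched, it is \emph{unramified} at the endpoint $\mathfrak{p}_0$, and Tate uniformization at $\mathfrak{p}_0$ forces $\ell\mid v_{\mathfrak{p}_0}(q)=-v_{\mathfrak{p}_0}(j)$ whenever $v_{\mathfrak{p}_0}(j)<0$; hence $v_{\mathfrak{p}_1}(q)\equiv \pm\delta_e(\phi_j)\not\equiv 0\pmod\ell$. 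This extra observation (vertical unramifiedness at vertices pins down the residue of $v(j)$ mod $\ell$) lets the paper avoid the auxiliary base change you need and keeps the bookkeeping of inertia groups to a single covering. Your approach also works, but the ``choose $i$'' step must really be carried out for the inertia group of $e_{(1)}/e_{H_1}$ in the $L(E[\ell])/L$ tower, not for $I_{e'/e}$ itself; you acknowledge this, but it is exactly the place where the paper's $i=1$ choice pays off by removing that ambiguity.
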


\begin{proof}
We start by extending the base field $K(C)$. Consider the Galois extension
\begin{equation}
K(C)\rightarrow{K(C)(\sqrt{\gamma'(E)})}=:L.
\end{equation}
That is, we take a representative of the \emph{Hasse invariant} $\gamma(E)\in{K(C)^{*}/(K(C)^{*})^2}$ (as in Section \ref{Uniformization}) and then take the square root of this representative.
Since the composite of two Galois extensions is again Galois, we find that the extension 
\begin{equation}
K(C)\rightarrow{L(E[\ell])}
\end{equation}
is again Galois and we will denote its Galois group by $G'$. We let this group act on the $\ell$-torsion in the natural way and obtain a (possibly noninjective) representation
\begin{equation}
\rho_{\ell}':G'\rightarrow{\mathrm{SL}_{2}(\mathbb{F}_{\ell})}.
\end{equation}
We will obtain a $\sigma\in{G'}$ that maps to the desired matrix in the Theorem. Note that there is a natural surjective map $s:G'\rightarrow{G}$ such that $\rho_{\ell}'=\rho_{\ell}\circ{s}$, which shows that the restriction of $\sigma$ to the Galois extension $K(C)\subset{K(C)(E[\ell])}$ maps to the same matrix under $\rho$. 

We now extend the morphism $K(C)\rightarrow{L(E[\ell])}$ to a disjointly branched morphism $\mathcal{D}\rightarrow{\mathcal{C}}$. The commutative diagram of function fields can be found below, where $H_{1}$ is the Galois group of $L(E[\ell])\supset{}L$ and $H_{2}$ the Galois group of $L(E[\ell])\supset{}K(C)(E[\ell])$.  
\begin{center}
\begin{tikzcd}[every arrow/.append style={dash}]
L \arrow[rightarrow]{r}{H_{1}} & L(E[\ell])\\
K(C) \arrow[rightarrow]{u} \arrow[rightarrow]{r}{G} & K(C)(E[\ell]) \arrow[rightarrow]{u}{H_{2}}
\end{tikzcd}
\end{center}

This then gives rise to a corresponding commutative diagram of semistable models over $R$:

\begin{center}
\begin{tikzcd}[every arrow/.append style={dash}]
\mathcal{D}^{H_{1}} \arrow[rightarrow]{d}& \mathcal{D}\arrow[rightarrow]{l} \arrow[rightarrow]{d}\\
\mathcal{C}  & \arrow[rightarrow]{l}  \mathcal{D}^{H_{2}}
\end{tikzcd}
\end{center}

We write $e_{H_{2}}$ for $e'\in\Sigma(\mathcal{D}^{H_{2}})$ and $e_{(1)}$ for an edge in $\Sigma(\mathcal{D})$ lying above $e_{H_{2}}$. Similarly, we let $e_{H_{1}}$ be the image of $e_{(1)}$ under the natural map $\Sigma(\mathcal{D})\rightarrow{\Sigma(\mathcal{D}^{H_{1}})}$ and $e_{G}=e$, so that we have labeled all relevant edges by subgroups in $G'$. We will now find a $\sigma$ in $I_{e_{H_{1}}/e_{(1)}}$ that maps to the matrix $\tau$ under $\rho'_{\ell}$. By restricting this $\sigma$ to $K(C)(E[\ell])$, we then obtain an element $\tilde{\sigma}\in{I_{e_{H_{2}}/e_{G}}}$ that maps to  $\tau$ 
under $\rho_{\ell}$.


Let $\Gamma_{0}$ be an irreducible component corresponding to a vertex of $e_{H_{1}}$ with generic point $\mathfrak{p}_{0}$. 
 We now extend $\mathcal{D}^{H_{1}}$ to a regular model $\mathcal{D}_{0}^{H_{1}}$ over $e$ and take the normalization $\mathcal{D}_{0}$ of $\mathcal{D}_{0}^{H_{1}}$ in $L(E[\ell])$, as in Theorem \ref{SwitchTheorem}. If $\mathcal{D}^{H_{1}}$ is already regular at $e_{H_{1}}$, we base change to $R(\pi^{1/n})\subset{}K(\pi^{1/n})$ for some $n>1$ (which doesn't affect disjointly branched morphisms, in the sense that vertical components stay unramified) and then take a desingularization $\mathcal{D}_{0}^{H_{1}}$ above $e_{H_{1}}$.\footnote{We will in fact find that $\mathcal{D}^{H_{1}}$ cannot be regular at $e_{H_{1}}$, since the length of $e_{(1)}$ is multiplied by $\ell$ under the map $\mathcal{D}\rightarrow{\mathcal{D}^{H_{1}}}$.}  Let $\mathfrak{p}_{1}\in{\mathcal{D}_{0,s}^{H_{1}}}$ be the generic point of a component intersecting $\Gamma_{0}$ and let $\mathfrak{q}_{1}$ be any point of $\mathcal{D}_{0}$ lying above it. By Theorem \ref{SwitchTheorem}, we find that $I_{\mathfrak{q}_{1}/\mathfrak{p}_{1}}=I_{e_{(1)}/e_{H_{1}}}$. We will find a $\sigma\in{I_{\mathfrak{q}_{1}/\mathfrak{p}_{1}}}$ that maps to the transvection $\tau$. This then also yields the desired $\sigma\in{I_{e_{(1)}/e_{H_{1}}}}$ by the equality $I_{\mathfrak{q}_{1}/\mathfrak{p}_{1}}=I_{e_{(1)}/e_{H_{1}}}$. 

Let us first note that by Lemma \ref{DisBranReduction}, $E$ has multiplicative reduction over $e_{H_{1}}$. Lemma \ref{ComponentsReductionType} then implies that $E$ has multiplicative reduction at $\mathfrak{p}_{1}$.  Writing $L'=L(E[\ell])$, we obtain an inclusion of completed fields
\begin{equation}
L_{\mathfrak{p}_{1}}\rightarrow{L'_{\mathfrak{q}_{1}}}.
\end{equation}
This field extension is again Galois, with Galois group $D_{\mathfrak{q}_{1}/\mathfrak{p}_{1}}$, see \cite[Chapter II, Proposition 9.9]{neu}. Furthermore, the inertia group of this Galois extension is equal to the inertia group $I_{\mathfrak{q}_{1}/\mathfrak{p}_{1}}$. Note that $E$ has split multiplicative reduction at $\mathfrak{p}_{1}$, so for any finite (and thus complete with respect to the unique induced valuation by \cite[Lemma 3.1]{Ste2}) extension $N$ of $L_{\mathfrak{p}_{1}}$ we can write
\begin{equation}
E(N)\simeq{N^{*}}/q^{\mathbb{Z}}
\end{equation}
for some $q\in{L_{\mathfrak{p}_{1}}}$ by Theorem \ref{UniformizationTheorem}. Consider the finite (Galois) extension defined by $L_{\mathfrak{p}_{1}}\subset{L_{\mathfrak{p}_{1}}(q^{1/\ell})}$. Using the adic uniformization, one then easily finds that $L_{\mathfrak{p}_{1}}(q^{1/\ell})\subset{L'_{\mathfrak{q}_{1}}}$. Here one uses that the image of $q^{1/\ell}$ defines an $\ell$-torsion point in $(L_{\mathfrak{p}_{1}}(q^{1/\ell}))^{*}/q^{\mathbb{Z}}$ and that $L(E[\ell])$ injects into $L'_{\mathfrak{q}_{1}}$.

\begin{lemma}
The extension $L_{\mathfrak{p}_{1}}\subset{L_{\mathfrak{p}_{1}}(q^{1/\ell})}$ is totally ramified of degree $\ell$. 
\end{lemma} 
\begin{proof}
$L_{\mathfrak{p}_{1}}\subset{L_{\mathfrak{p}_{1}}(q^{1/\ell})}$ is a Kummer extension, so it suffices to show that $v_{\mathfrak{p}_{1}}(q)$ is not divisible by $\ell$. Suppose that $v_{\mathfrak{p}_{1}}(q)$ is divisible by $\ell$. We will use our assumption $\ell\nmid{\delta_{e}(\phi_{j})}$ on the absolute value of the slope of the Laplacian of the $j$-invariant to obtain a contradiction.  
We have 
\begin{equation}
v_{\mathfrak{p}_{1}}(q)=-v_{\mathfrak{p}_{1}}(j)=-v_{\mathfrak{p}_{0}}(j)\pm\delta_{e}(\phi_{j})
\end{equation}
by Theorems \ref{UniformizationTheorem} and \ref{MainThmVert} respectively. We claim that $v_{\mathfrak{p}_{0}}(j)$ is divisible by $\ell$. Indeed, we either have that $v_{\mathfrak{p}_{0}}(j)=0$ or $v_{\mathfrak{p}_{0}}(j)<0$ by the fact that $E$ has multiplicative reduction on $e$. In the first case, the claim follows. In the second case, note that $\mathcal{D}\rightarrow{\mathcal{D}^{H_{1}}}$ is disjointly branched, so it is unramified above $\mathfrak{p}_{0}$. Using the adic uniformization for $\mathfrak{p}_{0}$, this then easily yields that $v_{\mathfrak{p}_{0}}(q)$ is divisible by $\ell$ (otherwise the extension above $\mathfrak{p}_{0}$ would be ramified). We thus see that $v_{\mathfrak{p}_{1}}(q)\equiv{-\delta_{e}(\phi_{j})}\bmod{\ell}\neq{0}\bmod{\ell}$, a contradiction.  
We thus conclude that $L_{\mathfrak{p}_{1}}\subset{L_{\mathfrak{p}_{1}}(q^{1/\ell})}$ is totally ramified of degree $\ell$.  
\end{proof}

Consider the points $z_{1}=\zeta_{\ell}$ (a primitive $\ell$-th root of unity) and $z_{2}=q^{1/\ell}$ in $(L_{\mathfrak{p}_{1}}(q^{1/\ell}))^{*}/q^{\mathbb{Z}}$. Their inverse images $P_{i}:=\psi^{-1}(z_{i})$ under the uniformization map then define a basis for the $\ell$-torsion of $E$. Using this, one finds that $L'_{\mathfrak{q}_{1}}=L_{\mathfrak{p}_{1}}(q^{1/\ell})$. 

Note that the extension $L_{\mathfrak{p}_{1}}\subset{L_{\mathfrak{p}_{1}}(q^{1/\ell})}$ has a cyclic Galois group, with generating automorphism $\sigma$ given by
\begin{equation}
\sigma(q^{1/\ell})=\zeta_{\ell}\cdot{}q^{1/\ell}.
\end{equation}
We then have
\begin{align}
\sigma(P_{1})&=\sigma(\psi^{-1}(z_{1}))=\psi^{-1}(\sigma(z_{1}))=\psi^{-1}(z_{1})=P_{1}\\
\sigma(P_{2})&=\sigma(\psi^{-1}(z_{2}))=\psi^{-1}(\sigma(z_{2}))=\psi^{-1}(z_{1}\cdot{}z_{2})=P_{1}+P_{2}.
\end{align}
We thus see that the corresponding matrix for $\sigma$ in terms of the basis $\{P_{1},P_{2}\}$ is given by 
\begin{equation}\tau:=\begin{pmatrix}
1 & 1 \\
0&1 \end{pmatrix}.
\end{equation} Note that $\sigma\in{I_{\mathfrak{q}_{1}/\mathfrak{p}_{1}}}$, which gives $\sigma\in{I_{e_{(1)}/e_{H_{1}}}}$ and thus $\sigma\bmod{H_{2}}=\sigma|_{K(C)(E[\ell])}\in{I_{e'/e}}$ by our earlier considerations. This finishes the proof.  
\end{proof}
\begin{cor}
Suppose that the conditions of Theorem \ref{MainTheorem2} are satisfied. There are then $|G|/\ell$ edges lying above $e$, with length $l(e')=l(e)/\ell$. 
\end{cor}

As an application of Theorem \ref{MainTheorem2}, we now show that these geometric transvections of edges generate $\mathrm{SL}_{2}(\mathbb{F}_{\ell})$, provided that $G$ acts irreducibly on $E[\ell]$. By acting irreducibly, we mean that there is no $G$-invariant subspace of the $\mathbb{F}_{\ell}$ vector space $E[\ell]$. Showing that the action of $G$ on $E[\ell]$ is irreducible amounts to showing that there are no $K(C)$-rational isogenies $E\rightarrow{E'}$ of degree $\ell$. 

\begin{theorem}\label{SurjectivityRepresentation}
Let $E$, $G$ and $\ell$ be as in Theorem \ref{MainTheorem2}. Suppose that $G$ acts irreducibly on $E[\ell]$. Then $\rho_{\ell}:G\rightarrow{}\mathrm{SL}_{2}(\mathbb{F}_{\ell})$ is surjective.
\end{theorem}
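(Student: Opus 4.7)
The plan is to combine Theorem \ref{MainTheorem2}, which places a transvection inside $\rho_\ell(G)$, with the classical fact that an irreducibly-acting subgroup of $\mathrm{SL}_2(\mathbb{F}_\ell)$ containing a transvection is all of $\mathrm{SL}_2(\mathbb{F}_\ell)$. More precisely, under the standing hypotheses on $E$, $G$ and $\ell$, Theorem \ref{MainTheorem2} supplies a $\sigma \in I_{e'/e} \subseteq G$ with $\rho_\ell(\sigma) = \tau$. Writing $H := \rho_\ell(G) \subseteq \mathrm{SL}_2(\mathbb{F}_\ell)$, the assumption that $G$ acts irreducibly on $E[\ell]$ translates directly into $H$ acting irreducibly on $\mathbb{F}_\ell^{2}$.

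I would then invoke Serre's Lemma~2 on page IV-20 of \cite{Ser2}: every irreducible subgroup of $\mathrm{GL}_2(\mathbb{F}_\ell)$ containing a transvection contains $\mathrm{SL}_2(\mathbb{F}_\ell)$. Since $H$ is already contained in $\mathrm{SL}_2(\mathbb{F}_\ell)$ thanks to the Weil-pairing normalisation, this forces $H = \mathrm{SL}_2(\mathbb{F}_\ell)$, which is the conclusion of the theorem.

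To keep the paper self-contained I would also sketch Serre's argument. Let $N \subseteq H$ be the subgroup generated by all transvections of $H$. Because conjugation by any element of $\mathrm{SL}_2(\mathbb{F}_\ell)$ sends transvections to transvections, the set of transvections in $H$ is stable under $H$-conjugation, so $N$ is normal in $H$. If $N$ stabilised a line $L \subseteq \mathbb{F}_\ell^{2}$, then the set of $H$-translates of $L$ would form a finite $N$-stable collection of lines, and irreducibility of $H$ would force these lines to span $\mathbb{F}_\ell^{2}$; but a transvection fixes a unique line, and a short linear-algebra check shows this is incompatible with a nontrivial transvection in $N$ preserving two distinct lines. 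Hence $N$ itself acts irreducibly, and in particular contains two non-commuting transvections, a pair which an explicit computation shows to generate $\mathrm{SL}_2(\mathbb{F}_\ell)$.

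The main obstacle lies precisely in this structural dichotomy: separating the case in which $N$ is contained in a Borel subgroup or a torus-normalizer from the case in which $N$ acts irreducibly. The hypothesis $\ell \geq 3$ in Theorem \ref{MainTheorem2} is needed exactly to avoid the small-prime degeneracies where transvections become involutions and the final generation step breaks, whereas the conditions $p \nmid \#\rho_\ell(G)$ and $\ell \nmid \delta_e(\phi_j)$ are invisible at this stage, having already been absorbed into the production of $\tau$.
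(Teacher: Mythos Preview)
Your proposal is correct and follows essentially the same approach as the paper: both combine the transvection produced by Theorem~\ref{MainTheorem2} with Serre's Lemma~2 from \cite[IV-20]{Ser2}, and both sketch that lemma. The paper's write-up is marginally more direct---it argues that if all transvections in $\rho_\ell(G)$ shared a common fixed line then that line would be $G$-invariant (contradicting irreducibility), and then observes that two transvections with distinct fixed lines are, in a suitable basis, $\begin{pmatrix}1&1\\0&1\end{pmatrix}$ and $\begin{pmatrix}1&0\\1&1\end{pmatrix}$, which generate $\mathrm{SL}_2(\mathbb{F}_\ell)$---whereas your version routes through the normal subgroup $N$ generated by transvections, but the content is the same.
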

\begin{proof}
We will write out the proof in \cite[IV-20, Lemma 2]{Ser2} in more detail. For any transvection $\sigma\in{G}$, let $V_{\sigma}$ be the unique one-dimensional subspace of $E[\ell]$ that is fixed by $\sigma$.  Let $P_{1}$ and $P_{2}$ be a basis for $E[\ell]$ such that $\sigma$ acts as $ \begin{pmatrix}
1 & 1 \\
0&1 \end{pmatrix} $ on $\{P_{1},P_{2}\}$. In other words, $\sigma(P_{1})=P_{1}$ and $\sigma(P_{2})=P_{1}+P_{2}$. For any $\tau\in{G}$, we now easily see that $\tau\sigma\tau^{-1}$ acts as a transvection on the adjusted basis $\{\tau(P_{1}),\tau(P_{2})\}$. 
We claim that there are transvections $\sigma$ and $\sigma'$ such that $V_{\sigma}\neq{V_{\sigma'}}$.
Indeed, suppose that $V_{\sigma}=V_{\sigma'}$ for all transvections. Then $V_{\sigma}$ is invariant under $G$. Indeed, for any $\tau\in{G}$, we have that $\tau(V_{\sigma})$ is the invariant subspace of the transvection $\tau\sigma\tau^{-1}$, thus yielding $\tau(V_{\sigma})=V_{\sigma}$. But this contradicts the irreducibility of $G$ on $E[\ell]$, as desired.

We can thus find $\sigma$ and $\sigma'$ such that $V_{\sigma}\neq{V_{\sigma'}}$. For a suitable basis, we then find that 
\begin{equation}
\rho(\sigma)= \begin{pmatrix}
1 & 1 \\
0&1 \end{pmatrix},\,
\rho(\sigma')= \begin{pmatrix}
1 & 0 \\
1&1 \end{pmatrix}.
\end{equation} 
These two matrices generate $\mathrm{SL}_{2}(\mathbb{F}_{\ell})$, which concludes the proof of the theorem.  
\end{proof}

\bibliographystyle{alpha}
\bibliography{bibfiles}{}

\end{document}